\newtheorem{teo}{Theorem}[section]
\newtheorem{lem}[teo]{Lemma}
\newtheorem{cor}[teo]{Corollary}
\newtheorem{prop}[teo]{Proposition}
\theoremstyle{remark}
\theoremstyle{definition}
\newtheorem{defi}[teo]{Definition}
\newcommand{\average}{{\mathchoice {\kern1ex\vcenter{\hrule height.4pt
width 6pt
depth0pt} \kern-9.7pt} {\kern1ex\vcenter{\hrule height.4pt width 4.3pt
depth0pt}
\kern-7pt} {} {} }}
\def\vint_#1{\mathchoice%
          {\mathop{\kern 0.2em\vrule width 0.6em height 0.69678ex depth -0.58065ex
                  \kern -0.8em \intop}\nolimits_{\kern -0.4em#1}}%
          {\mathop{\kern 0.1em\vrule width 0.5em height 0.69678ex depth -0.60387ex
                  \kern -0.6em \intop}\nolimits_{#1}}%
          {\mathop{\kern 0.1em\vrule width 0.5em height 0.69678ex
              depth -0.60387ex
                  \kern -0.6em \intop}\nolimits_{#1}}%
          {\mathop{\kern 0.1em\vrule width 0.5em height 0.69678ex depth -0.60387ex
                  \kern -0.6em \intop}\nolimits_{#1}}}
\def\vintslides_#1{\mathchoice%
          {\mathop{\kern 0.1em\vrule width 0.5em height 0.697ex depth -0.581ex
                  \kern -0.6em \intop}\nolimits_{\kern -0.4em#1}}%
          {\mathop{\kern 0.1em\vrule width 0.3em height 0.697ex depth -0.604ex
                  \kern -0.4em \intop}\nolimits_{#1}}%
          {\mathop{\kern 0.1em\vrule width 0.3em height 0.697ex depth -0.604ex
                  \kern -0.4em \intop}\nolimits_{#1}}%
          {\mathop{\kern 0.1em\vrule width 0.3em height 0.697ex depth -0.604ex
                  \kern -0.4em \intop}\nolimits_{#1}}}
\newcommand{\kintint}[2]{\mathchoice%
          {\mathop{\kern 0.2em\vrule width 0.6em height 0.69678ex depth -0.58065ex
                  \kern -0.8em \intop}\nolimits_{\kern -0.45em#1}^{#2}}%
          {\mathop{\kern 0.1em\vrule width 0.5em height 0.69678ex depth -0.60387ex
                  \kern -0.6em \intop}\nolimits_{#1}^{#2}}%
          {\mathop{\kern 0.1em\vrule width 0.5em height 0.69678ex depth -0.60387ex
                  \kern -0.6em \intop}\nolimits_{#1}^{#2}}%
          {\mathop{\kern 0.1em\vrule width 0.5em height 0.69678ex depth -0.60387ex
                  \kern -0.6em \intop}\nolimits_{#1}^{#2}}}
\def\cleardoublepage{\clearpage\if@twoside \ifodd\c@page\else
\hbox{}
\thispagestyle{empty}
\newpage
\if@twocolumn\hbox{}\newpage\fi\fi\fi}
\title{Domain Variation Solutions for degenerate two phase free boundary problems}
\author{Aleksandr Dzhugan}
\address{Aleksandr Dzhugan: Dipartimento di Matematica\\ Universit\`a di Bologna\\ Piazza di Porta S.Donato 5\\ 40126, Bologna-Italy}
\email{aleksandr.dzhugan2@unibo.it }
\thanks{A.D. is supported by the fellowship INDAM-DP-COFUND-2015 "INdAM Doctoral Programme in
Mathematics and/or Applications Cofunded by Marie Sklodowska-Curie Actions", Grant 713485.}
\author{Fausto Ferrari}
\address{Fausto Ferrari: Dipartimento di Matematica\\ Universit\`a di Bologna\\ Piazza di Porta S.Donato 5\\ 40126, Bologna-Italy}
\email{fausto.ferrari@unibo.it }
\thanks{F.F. is partially supported by INDAM-GNAMPA project 2019: Propriet\`a di regolarit\`a delle soluzioni viscose con applicazioni a problemi di frontiera libera.}
\date{\today}
\begin{document}
\begin{abstract}
We discuss the domain variation solutions notion for some degenerate elliptic two-phase free boundary problems as well as the viscosity definition of the problem when the operator is degenerate.
\end{abstract}
\maketitle
\section{Introduction}
In the celebrated paper \cite{ACF}, the authors look for solutions, in the sense of the variation domain, to some nonlinear functionals. The minima to such functionals are endowed with some regularity properties like global Lipschitz continuity. In addition they also find the Euler-Lagrange equations that govern the underlying problem associated with the considered Bernoulli functional:  the so-called homogeneous elliptic two-phase free boundary problem. In our opinion the approach described in \cite{ACF} is highly not trivial and, at the same time, reveals some interesting details that we consider particularly useful for extending to more complex operators the geometrical approach that has been developed in studying regularity of free boundary problems since \cite{ACF} and \cite{C1}, see also \cite{CS} for a basic bibliography. In fact, after that seminal papers, there have been many other achievements about the regularity theory of the viscosity solutions of two phase free boundary problems. From this point of view, we recall for the regularity of the free boundary as well \cite{Feldman}, \cite{CFS}, \cite{FS_advan}, \cite{AF}, \cite{LN}  respectively for homogeneous fully nonlinear operators, homogeneous linear operators with variable coefficients, homogeneous linear operators, with bounded first order terms, homogeneous fully nonlinear operators with flat boundaries and homogeneous $p-$Laplace operator. Successively, after the fundamental contribution introduced in \cite{D},  many other inhomogeneous cases have been faced: see \cite{DFS_apdes}, \cite{DFS_japa}, \cite{DFS_trans}.  In fact the approach used in \cite{D} is particularly flexible and has been extended to other one phase inhomogeneous cases that are not covered by previous cited papers yet, see the recent progresses contained in \cite{Braga_Moreira}, \cite{Leitao_Ricarte} and \cite{Lederman_Wol_1}, \cite{Lederman_Wol_2} in the variational one phase case. Very recently also a new contribution following the main stream of \cite{ACF}, in the variational setting,   appeared:  see \cite{DSV}. We remark in addition that in \cite{K} a different approach, that does not use the monotonicity formula in the $p-$Laplace setting, has been introduced. Concerning the notion of viscosity solution we refer in any case to \cite{CC}, \cite{CIL} and \cite{BCD}.

 In this note, following the ideas described in \cite{ACF}, we would like to formalize the correct formulation of the two phase free boundary problems arising from Bernoulli type functionals when we consider nonnegative matrix of variable coefficients as well as re-find the cases in which a nonlinear dependence on the gradient of the solution itself exists. We have in mind two concrete examples respectively given by the Kohn-Laplace operator in the Heisenberg group and the $p(x)-$Laplace operator. The $p-$Laplace case has been also discussed in \cite{LDT} so that it results also interesting to understand the $p(x)-$Laplace operator as well. We remind that the $p-$Laplace operator is
$$
\Delta_p:=\mbox{div}(|\nabla \cdot |^{p-2}\nabla),
$$ 
while the $p(x)-$Laplace operator is
$$
\Delta_{p(x)}:=\mbox{div}(|\nabla \cdot |^{p(x)-2}\nabla),
$$
where the function $p$ satisfies $1<p(x)<\infty.$ Of course, $\Delta_{p(x)}=\Delta_{p}$ when $p(x)$ is constant and $p(x)\equiv p.$

The Kohn-Laplace operator in $\mathbb{H}^1$ is defined as
\begin{equation}\label{equa}
\Delta_{\mathbb{H}^1}u(x,y,t)=\frac{\partial^2u}{\partial x^2}+\frac{\partial^2u}{\partial y^2}+4y\frac{\partial^2u}{\partial x\partial t}-4x\frac{\partial^2u}{\partial y\partial t}+4(x^2+y^2)\frac{\partial^2u}{\partial t^2}
\end{equation}
and even if it is linear, it results to be degenerate elliptic. In particular, using an intrinsic interpretation of the geometric object entering in the description of the non-commutative underlying structure $\mathbb{H}^1$ it is possible to obtain an intrinsic formulation of the two phase problem.
The Kohn-Laplace operator is degenerate. In fact its lowest eigenvalue is always zero. As a consequence it is important to understand what is the right condition to require to put on the free boundary in case we wish to formulate the problem in a viscosity sense. 
The theory of the viscosity solutions has been applied to the study of free boundary problems like:
\begin{equation}\label{two_phase_Euc}
\left\{\begin{array}{lr}
\Delta u=f,& \mbox{in }\Omega^+(u):= \{x\in \Omega:\hspace{0.1cm} u(x)>0\},\\
\Delta u=f,& \mbox{in }\Omega^-(u):=\mbox{Int}(\{x\in \Omega:\hspace{0.1cm} u(x)\leq 0\}),\\
(u^+_n)^2-(u^-_n)^2=1&\mbox{on }\mathcal{F}(u):=\partial \Omega^+(u)\cap \Omega,
\end{array}
\right.
\end{equation} 
since \cite{C1}, for homogeneous problems, by Luis Caffarelli. 
Here $\Omega\subset \mathbb{R}^n$ is an open set, and $f\in C^{0,\alpha}\cap L^\infty(\Omega),$ while $u^+_n$ formally denotes the normal derivative at the points belonging to  $\mathcal{F}(u),$ where $n$ is the unit normal in those points
 whenever this makes sense, pointing inside $\Omega^+(u),$ as well as $u^-_n$ denotes the normal derivative to the set  $\mathcal{F}(u)$ and $n$ is the unit normal to the set $\mathcal{F}(u)$ at the point $x\in \mathcal{F}(u)$ pointing inside $\Omega^-(u)$. 
 If in case $\mathcal{F}(u)$ were $C^1,$ even supposing for simplicity that $f\equiv 0,$ then $u$ would satisfy $\Delta u=0$ in $\Omega^+(u)\cup\Omega^-(u). $ On the other hand, $u\in C(\Omega)$ is a viscosity solution, so that $\Delta u=0$ in $\Omega^+(u)$ and $\Delta u=0$ in $\Omega^-(u)$ in the classic sense and the problem \eqref{two_phase_Euc}  may be reduced to two Dirichlet problems. However the assumption on the level set $\mathcal{F}(u):=\partial \Omega^+(u)\cap \Omega$ can not be formulated in a classical fashion, because $\mathcal{F}(u)$ is an unknown of the problem. In principle, the set $\mathcal{F}(u)$ might be very irregular and the notion of solution would not make sense in the classical meaning, so that has to be weakened. 
 On the contrary, we suppose exactly that the fact itself of knowing that $u$ satisfies the free boundary problem should imply that $u$ is endowed with some further regularity properties.  Thus, assuming only that $\mathcal{F}(u)$ is Lipschitz, the solution of the Dirichlet problem in a neighborhood of the free boundary may be {\it a priori} no better than a H\"older continuous function until the boundary. Hence, it appears clear that we can not give a pointwise classical formulation of the problem on the free boundary. For avoiding this loop, in \cite{C1} a viscosity notion of solution was introduced. In that case the boundary condition is supposed to be fulfilled only where a weak normal exists, see \cite{CS}. 

The definition of solution in a viscosity sense of the problem \eqref{two_phase_Euc} can be stated, see \cite{DFS_apdes} and also the original statement in \cite{C1} or in \cite{CS}, in the following way: a continuous function $u$ is a solution to \eqref{two_phase_Euc} if:
\begin{itemize}
\item[(i)] $\Delta u=f$ in a viscosity sense in $\Omega^+(u)$ and $\Omega^-(u);$
\item[(ii)] let $x_0\in \mathcal{F}(u).$ For every function $v\in C(B_\varepsilon(x_0))$, $\varepsilon >0$ such that $v\in C^{2}(\overline{B^+(v)})\cap C^{2}(\overline{B^-(v)}),$ being $B:= B_\varepsilon(x_0)$  and $\mathcal{F}(v)\in C^2,$ if $v$ touches $u$ from below (resp. above) at $x_0\in \mathcal{F}(v),$ then 
$$
(v_n^+(x_0))^2-(v_n^-(x_0))^2\leq 1\quad (\mbox{resp.}\quad (v_n^+(x_0))^2-(v_n^-(x_0))^2\geq 1).
$$
\end{itemize}

Moreover, also the following notion of strict comparison subsolution (supersolution) plays a fundamental role in the regularity theory of one/two-phase free boundary problems, see \cite{D}:
a function $v\in C(\Omega)$ is a strict comparison subsolution (supersolution) to \eqref{two_phase_Euc} if: $v\in C^{2}(\overline{\Omega^+(v)})\cap C^{2}(\overline{\Omega^-(v)})$ and
\begin{itemize}
\item[(i)] $\Delta v>f$ (resp. $\Delta v<f$) in a viscosity sense in $\Omega^+(v)\cup\Omega^-(v);$
\item[(ii)] for every $x_0\in \Omega,$ if $x_0\in \mathcal{F}(v)$ then 
$$
(v_n^+(x_0))^2-(v_n^-(x_0))^2> 1\quad (\mbox{resp.}\quad (v_n^+(x_0))^2-(v_n^-(x_0))^2<1,\quad v_n^+(x_0)\not=0).
$$
\end{itemize}

As a consequence a strict comparison subsolution $v$ cannot touch a viscosity solution $u$ from below at any point in $\mathcal{F}(u)\cap\mathcal{F}(v).$ Analogously a strict comparison supersolution $v$ cannot touch a viscosity solution $u$ from above at any point in $\mathcal{F}(u)\cap\mathcal{F}(v).$ When $u$ is a classical solution and the free boundary is sufficiently smooth,  previous comparison property comes from the Hopf maximum principle, whenever the condition on the flux balance on the free boundary is given by a function $G(\cdot,\cdot)$ defined in $[0,\infty)\times[0,\infty)$  that is also strictly increasing in the first entrance and strictly decreasing in the second variable.

 We are mainly interested in viscosity solution, but the natural definition of two-phase free boundary problems is usually determined by looking for local minima of functionals like 
\begin{equation}\label{omo}
\mathcal{E}(v)=\int_{\Omega}\Big(|\nabla v|^2+\chi_{\{v>0\}}+2fv\Big)dx
\end{equation}
defined on subsets of $H^1(\Omega)$ satisfying some fixed conditions, for instance assumed on the boundary of $\Omega$ and on the sign of the functions themselves. In \cite{ACF} exactly this approach has been followed for functionals, associated with the Laplace operator like \eqref{omo}, in the homogeneous case. As a consequence, for local minima $u$ of \eqref{omo} (supposing $f=0$) in \cite{ACF} have been determined the conditions that have to be satisfied on the free boundary, morally the set $\{x\in \Omega:\: u(x)=0\}$.
 
Since we are interested in problems governed by other operators with respect to the laplacian, like nonlinear ones and, overall possibly degenerate, we wish, at first, to understand what is the right condition to put on the free boundary, for the problem in a non-divergence form, in a degenerate setting.

In fact the free boundary $\mathcal{F}(u)$ is an unknown of the problem and for this reason we need to start from the energy functional that describes the problem in the variational setting for obtaining the non-divergence case.

 With this aim, we discuss the notion of domain variation solution assuming that the energy functionals that we wish to study may be associated with degenerate operators like the $p(x)$-Laplace operator $\Delta_{p(x)},$ that is a generalization of the most popular $p-$Laplace operator when the function $p(x)$ is constant or operators like $\mbox{div}(A(x)\nabla),$ supposing that the matrix $A$ satisfies $\langle A(x)\xi,\xi\rangle\geq 0$ for every $\xi\in \mathbb{R}^n$ whenever $A$ is a smooth matrix of coefficients. For the notion of solution in the sense of variation domain and applications we refer to \cite{Weiss}, see also \cite{Fe}.  
 At the end of our discussion we conclude that in any Carnot group the two phase problem assumes the following nonvariational form:
 \begin{equation}\label{two_phase_Carnot_x}
\left\{\begin{array}{lr}
\Delta_{\mathbb{G}} u=f,& \mbox{in }\Omega^+(u):= \{x\in \Omega:\hspace{0.1cm} u(x)>0\},\\
\Delta_{\mathbb{G}} u=f,& \mbox{in }\Omega^-(u):=\mbox{Int}(\{x\in \Omega:\hspace{0.1cm} u(x)\leq 0\}),\\
|\nabla_{\mathbb{G}} u^+|^2-|\nabla_{\mathbb{G}} u^-|^2=1,&\mbox{on }\mathcal{F}(u):=\partial \Omega^+(u)\cap \Omega,
\end{array}
\right.
\end{equation}
where $\Delta_{\mathbb{G}}$ is a sublaplacian in a Carnot group $\mathbb{G},$ see Section \ref{keytool} for the definitions of Carnot groups and the associated notation, and Section \ref{variational_x} for a little more general presentation of the result. We remark here, however, that now the condition posed on free boundary is governed by an {\it intrinsic} jump of gradients, see Section \ref{keytool} and, for the one-phase case, see \cite{FeVa}.


Moreover, in the parallel case of the $p(x)-$Laplacian, the functional becomes
$$
\mathcal{E}_{p(x)}(u)=\int_{\Omega}\left(\mid \nabla v\mid^{p(x)} +\chi_{\{v>0\}}+p(x)fv\right)dx,
$$
and in this case we obtain:
 \begin{equation}\label{two_phase_p(x)}
\left\{\begin{array}{lr}
\Delta_{p(x)} u=f,& \mbox{in }\Omega^+(u):= \{x\in \Omega:\hspace{0.1cm} u(x)>0\},\\
\Delta_{p(x)} u=f,& \mbox{in }\Omega^-(u):=\mbox{Int}(\{x\in \Omega:\hspace{0.1cm} u(x)\leq 0\}),\\
|\nabla u^+|^{p(x)}-|\nabla u^-|^{p(x)}=\frac{1}{p(x)-1},&\mbox{on }\mathcal{F}(u):=\partial \Omega^+(u)\cap \Omega.
\end{array}
\right.
\end{equation}
see Section \ref{pxlapvar} for a slightly more general setting of the problem.

We complete our analysis in Section \ref{conclusions} stating the good notion of viscosity solutions for problems like \eqref{two_phase_Carnot_x} and \eqref{two_phase_p(x)}. In the case \eqref{two_phase_Carnot_x} the characteristic points introduce new difficulties in the application of the approach applied in \cite{D}.

In the next section, for describing the notion of domain variation solution, we start with the simplest case in one dimension.


\section{The simplest one dimension Euclidean case}
Before entering into the details of our subject, we consider the basic heuristic example in the one dimension for the following functional
$$
\mathcal{E}(v)=\int_{-1}^{1}(v'^2+\chi_{\{v>0\}}+2fv)dx,
$$ 
where
$$
\chi_{\{v>0\}}=\left\{
\begin{array}{l}
1,\quad x\in \{v>0\},\\
0,\quad x\in \{v\leq 0\},
\end{array}
\right.
$$
and $v\in K=\{w\in H^1([-1,1]):\quad w(-1)=a,\quad w(1)=b\}$ being $a, b$ assigned values to the boundary. Moreover we assume, for simplicity, that $f\in C^{0,\gamma}([-1,1]).$

We are interested in those functions which become minima or critical values for $\mathcal{E}$ perturbing the set of definition in a neighborhood of the points where $v$ vanishes. In mathematical language, for every function $v\in K$ and for every function $\varphi\in C_0^\infty(]-1,1[)$ we consider the function $v(x)=v_\varepsilon^\varphi(x+\varepsilon \varphi(x)).$ We shall simply write $v_\varepsilon:=v_\varepsilon^\varphi$ to avoid the cumbersome notation. It is clear that $\tau_\varepsilon=I+\varepsilon \varphi$ is an application that transforms $[-1,1]$ in  itself whenever $\varepsilon$ is sufficiently small. We say that $v$ is a variational domain solution whenever 
$$
\frac{d}{d\varepsilon}\mathcal{E}(v_\varepsilon)_{|\varepsilon=0}=0.
$$
To do this, we consider
\begin{equation}\label{unadimensione}
\mathcal{E}(v_\varepsilon)=\int_{-1}^{1}v'^2_\varepsilon(y)+\chi_{\{v_\varepsilon>0\}}(y)+2fv_\varepsilon dy
\end{equation}
Since $\tau_\varepsilon$ is invertible whenever $\varepsilon$ is small we obtain
 $(\tau^{-1}_\varepsilon)'(y)=(\tau'_\varepsilon(x))^{-1},$ being $x=\tau^{-1}_\varepsilon(y)$ and 
$$
\tau'_\varepsilon(x)=1+\varepsilon\varphi'(x),
$$
$$
(\tau^{-1}_\varepsilon)'(y)=\frac{1}{1+\varepsilon\varphi'(\tau^{-1}_\varepsilon(y))}.
$$
This implies that for $\varepsilon\to 0$
$$
(\tau^{-1}_\varepsilon)'(y)=1-\varepsilon\varphi'(\tau^{-1}_\varepsilon(y))+o(\varepsilon).
$$
We perform the change of variable $y=\tau_\varepsilon(x)$  so that:
\begin{equation}\label{unadimensione1}
\begin{split}
&\mathcal{E}(v_\varepsilon)=\int_{-1}^{1}\left(v'^2_\varepsilon(\tau_\varepsilon(x))+\chi_{\{v_\varepsilon>0\}}(\tau_\varepsilon(x))+2f(\tau_\varepsilon(x))v_\varepsilon(\tau_\varepsilon(x))\right)\tau_\varepsilon'(x)dx\\
&=\int_{-1}^{1}\left(v'^2_\varepsilon(\tau_\varepsilon(x))+\chi_{\{v_\varepsilon>0\}}(\tau_\varepsilon(x))+2f(\tau_\varepsilon(x))v_\varepsilon(\tau_\varepsilon(x))\right)(1+\varepsilon \varphi'(x))dx
\end{split}
\end{equation}
and, since $v'(x)=v_\varepsilon'(\tau_{\varepsilon}(x))\tau_\varepsilon'(x)=v_\varepsilon'(\tau_{\varepsilon}(x))(1+\varepsilon\varphi'(x))$, we get
\begin{equation}\label{unadimensione3}
\begin{split}
&=\int_{-1}^{1}[v'^2(x)(1+\varepsilon\varphi'(x))^{-2}+\chi_{\{v_\varepsilon>0\}}(\tau_\varepsilon(x))+2f(\tau_\varepsilon(x))v(x)](1+\varepsilon\varphi'(x))dx\\
\end{split}
\end{equation}
\begin{equation*}
\begin{split}
&=\int_{-1}^{1}[v'^2(x)(1-\varepsilon\varphi'(x)+o(\varepsilon))^2+\chi_{|\{v_\varepsilon>0\}}(x+\varepsilon \varphi(x))+2f(\tau_\varepsilon(x))v(x)](1+\varepsilon\varphi'(x))dx\\
&=\int_{-1}^{1}[v'^2(x)(1-2\varepsilon\varphi'(x)+o(\varepsilon))+\chi_{|\{v_\varepsilon>0\}}(x+\varepsilon \varphi(x))](1+\varepsilon\varphi'(x))dx\\
&+2\int_{-1}^1f(\tau_\varepsilon(x))v(x)(1+\varepsilon\varphi'(x))dx
\end{split}
\end{equation*}
that is
\begin{equation}\label{unadimensione4}
\begin{split}
&=\mathcal{E}(v)+\int_{-1}^{1}-\varepsilon v'^2(x)\varphi'(x)+[\chi_{|\{v_\varepsilon>0\}}(x+\varepsilon \varphi(x))(1+\varepsilon\varphi'(x))-\chi_{|\{v>0\}}(x)]dx\\
&+2\varepsilon\int_{-1}^1(f(x)v(x)\varphi'(x)+ f'(x)v(x)\varphi(x))dx+o(\varepsilon)\\
&=\mathcal{E}(v)+\int_{-1}^{1}-\varepsilon v'^2(x)\varphi'(x)+[\chi_{|\{v_\varepsilon>0\}}(x+\varepsilon \varphi(x))-\chi_{|\{v>0\}}(x)]dx\\
&+\varepsilon\int_{-1}^{1}\chi_{|\{v_\varepsilon>0\}}(x+\varepsilon \varphi(x))\varphi'(x)dx+2\varepsilon\int_{-1}^1(f(x)\varphi'(x)+ f'(x)\varphi(x))v(x)dx+o(\varepsilon)\\
\end{split}
\end{equation}
and, integrating by parts and recalling that $\varphi$ is a compactly supported function, we obtain:
\begin{equation}\label{unadimensione5}
\begin{split}
&=\mathcal{E}(v)+\int_{-1}^{1}-\varepsilon v'^2(x)\varphi'(x)dx+\varepsilon\int_{-1}^{1}\chi_{|\{v_\varepsilon>0\}}(x+\varepsilon \varphi(x))\varphi'(x)dx\\
&+2(f(1)\varphi(1)v(1)-f(-1)\varphi(-1)v(-1))-2\varepsilon\int_{-1}^1f(x)\varphi(x)v'(x)dx+o(\varepsilon)\\
&=\mathcal{E}(v)+\int_{-1}^{1}-\varepsilon v'^2(x)\varphi'(x)dx+\varepsilon\int_{-1}^{1}\chi_{|\{v_\varepsilon>0\}}(x+\varepsilon \varphi(x))\varphi'(x)dx\\
&-2\varepsilon\int_{-1}^1f(x)\varphi(x)v'(x)dx+o(\varepsilon).
\end{split}
\end{equation}
As a consequence, if $v$ is a local minimum for the functional $\mathcal{E}$ in $K,$ then

\begin{equation}\label{unadimensione2}
\begin{split}
&0\leq \frac{\mathcal{E}(v_\varepsilon)-\mathcal{E}(v)}{\varepsilon}=-\int_{-1}^{1}v'^2(x)\varphi'(x)dx+\int_{-1}^{1}\chi_{|\{v_\varepsilon>0\}}(x+\varepsilon \varphi(x))\varphi'(x)dx\\
&-2\int_{-1}^1f(x)\varphi(x)v'(x)dx+o(1).
\end{split}
\end{equation}
Moreover, it also results that for every $\varphi\in C_0^{\infty}(]-1,1[)$ we have
$$
\lim_{\varepsilon\to 0}\frac{\mathcal{E}(v_\varepsilon)-\mathcal{E}(v)}{\varepsilon}=0.
$$
Hence, if $v$ is a local minimum for $\mathcal{E}$ on $K$, then $v$ is a domain variational solution.  

As a consequence, we have obtained that a local minimum have to satisfy the following relationship:
$$
-\int_{-1}^{1}v'^2(x)\varphi'(x)dx-2\int_{-1}^1f(x)\varphi(x)v'(x)dx+\int_{-1}^{1}\chi_{|\{v>0\}}(x)\varphi'(x)dx=0,
$$
for every $\varphi\in C_0^{\infty}(]-1,1[).$

On the other hand, for every $\phi \in C^{\infty}_0(-1,1)$ such that $\mbox{supp}(\phi)\subset \{v_\varepsilon>0\}$ or $\mbox{supp}(\phi)\subset \mbox{int}\{v_\varepsilon\leq 0\}$
it follows from the previous relation that $v''=f(x)$ in $]-1,1[\setminus \{x\in ]-1,1[:\quad v(x)=0\}$ because $v$ is a local minimum for $\mathcal{E},$ (we will proof this property below in a more general case).

As a consequence
\begin{equation}\label{equauno}\begin{split}
&-\lim_{\delta\to 0^+}\int_{-1}^{x_\delta}v'^2(x)\varphi'(x)+2f(x)\varphi(x)v'(x)dx-\lim_{\epsilon\to 0^+}\int_{x_\epsilon}^{1}v'^2(x)\varphi'(x)+2f(x)\varphi(x)v'(x)dx\\
&-\lim_{\delta\to 0^+,\delta\to 0^+}\int_{x_\epsilon}^{x_\delta}v'^2(x)\varphi'(x)+2f(x)\varphi(x)v'(x)dx+\int_{-1}^{1}\chi_{|\{v>0\}}(x)\varphi'(x)dx=0,
\end{split}
\end{equation}
for every $\varphi\in C_0^{\infty}(]-1,1[)$ and $\epsilon,\delta>0,$ we consider the sets $\{v(x)<-\epsilon\}$ and $\{v(x)>\delta\}.$ 
Then integrating by parts we obtain, from \eqref{equauno} and keeping in mind that we assumed $\mbox{meas}_1(\{v=0\})=0$:
\begin{equation}\label{equadue}\begin{split}
&\lim_{\delta\to 0^+}\int_{-1}^{x_\delta}2\left(v''(x)-f(x))\right)\varphi(x)v'(x)dx-\lim_{\delta\to 0^+}[v'^2(x)\varphi(x)]_{x=-1}^{x=x_\delta}\\
&+\lim_{\epsilon\to 0^+}\int_{\delta_\epsilon}^{1}2\left(v''(x)-f(x))\right)\varphi(x)v'(x)dx-\lim_{\epsilon\to 0^+}[v'^2(x)\varphi(x)]^{x=1}_{x=x_\epsilon}\\
&+\int_{-1}^{1}\chi_{|\{v>0\}}(x)\varphi'(x)dx=0,
\end{split}
\end{equation}
where $v(x_\epsilon)=-\epsilon$ and $v(x_\delta)=\delta.$

Thus, from \eqref{equadue}, we get 
\begin{equation}\begin{split}
-\lim_{\delta\to 0^+}[v'^2(x)\varphi(x)]_{x=-1}^{x=x_\delta}-\lim_{\epsilon\to 0^+}[v'^2(x)\varphi(x)]^{x=1}_{x=x_\epsilon}+\int_{-1}^{1}\chi_{|\{v>0\}}(x)\varphi'(x)dx=0,
\end{split}
\end{equation}
or
\begin{equation}\begin{split}
-\lim_{\delta\to 0^+}v'^2(x_\delta)\varphi(x_\delta)+\lim_{\epsilon\to 0^+}v'^2(x_\epsilon)\varphi(x_\epsilon)+\int_{x_0}^{1}\varphi'(x)dx=0,
\end{split}
\end{equation}
that implies, for every $\varphi \in C_0^{\infty},$ denoting by $x_0$ the free boundary, that is $v(x_0)=0:$

$$
(-(v^-)'^2(x_0)+(v^+)'^2(x_0))\varphi(x_0)-\varphi(x_0)=0.
$$
Hence, it results
$$
(v^+)'^2(x)-(v^-)'^2(x)=1, \quad \mbox{on}\quad \{v=0\}.
$$
In this way, we have obtained the free boundary condition associated with the "Euler-Lagrange" equations to local minima of the functional $\mathcal{E}$ in the non-homogeneous  case, (of course assuming that the free boundary is a set of measure zero).
We also proved that, at least in one dimension, the free boundary condition does not depend on the non-homogeneous term $f$.


\section{Basic informations about the Heisenberg group and Carnot groups}\label{keytool}
Let $\mathbb{H}^n$ be the Heisenberg group of order $n.$ We denote by
$\mathbb{H}^n$ the set $\mathbb{R}^{2n+1},$ $n\in \mathbb{N},$ $n\geq 1,$ $(x,y,t)\in \mathbb{R}^{2n+1},$ endowed with the non-commutative inner law such that for every $(x_1,y_1,t_1)\in \mathbb{R}^{2n+1},$ $(x_2,y_2,t_2)\in \mathbb{R}^{2n+1},$ $x_{i}\in \mathbb{R}^n,$ $y_{i}\in \mathbb{R}^n,$ $i=1,2:$ 
$$
(x_1,y_1,t_1)\circ (x_2,y_2,t_2)=(x_1+x_2,y_1+y_2,t_1+t_2+2(x_2\cdot y_1-x_1\cdot y_2)),
$$
and $x_i\cdot y_i$ denote the usual inner product in $\mathbb{R}^n.$

Let $X_i=(e_i,0,2y_i)$ and $Y_i=(0,e_i,-2x_i),$ $i=1,\dots,n,$ where $\{e_i\}_{1\leq i\leq n}$ is the canonical basis for $\mathbb{R}^n.$

We use the same symbol to denote the vector fields associated with the previous vectors so that for $i=1,\dots,n$
$$
X_i=\partial_{x_i}+2y_i\partial_t
$$
$$
Y_i=\partial_{y_i}-2x_i\partial_t.
$$
The commutator between the vector fields is
$$
[X_i,Y_i]=-4\partial_t
$$
otherwise is $0.$ 
The intrinsic gradient of a smooth function $u$ in a point $P$ is  
$$
\nabla_{\mathbb{H}^n}u(P)=\sum_{i=1}^n(X_iu(P)X_i(P)+Y_iu(P)Y_i(P)).
$$
There exists a unique metric on $H\mathbb{H}^n(P)=\mbox{span}\{X_1,\dots,X_n,Y_i,\dots,Y_n\}$ that makes orthonormal the set of vectors $\{X_1,\dots,X_n,Y_i,\dots,Y_n\}.$ Thus, for every $P\in \mathbb{H}^n$ and  for every $U,W\in H\mathbb{H}^n(P),$ $U=\sum_{j=1}^n(\alpha_{1,j}X_{j}(P)+\beta_{1,j}Y_j(P)),$
$V=\sum_{j=1}^n(\alpha_{2,j}X_{j}(P)+\beta_{2,j}Y_j(P))$
$$
\langle U,V\rangle=\sum_{j=1}^n(\alpha_{1,j}\alpha_{2,j}+\beta_{1,j}\beta_{2,j}).
$$  
In particular we get a norm associated with the metric on $\mbox{span}\{X_1,\dots,X_n,Y_i,\dots,Y_n\}$  and
$$
|U|=\sqrt{\sum_{j=1}^n\left(\alpha_{1,j}^2+\beta_{1,j}^2\right)}.
$$
For example, the norm of the intrinsic gradient of the smooth function $u$ in $P$ is
$$
|\nabla_{\mathbb{H}^n} u(P)|=\sqrt{\sum_{i=1}^n\left((X_iu(P))^2+(Y_iu(P))^2\right)}.
$$
Moreover, if $\nabla_{\mathbb{H}^n} u(P)\not=0$ the norm of
$$
\frac{\nabla_{\mathbb{H}^n}u(P)}{|\nabla_{\mathbb{H}^n}u(P)|}
$$
is equal to one.

If $\nabla_{\mathbb{H}^n} u(P)=0$ then we say that the point $P$ is characteristic for the smooth surface $\{u=u(P)\}.$
Hence for every point $P\in \{u=u(P)\},$  that it is not characteristic, it is well defined the intrinsic normal to the surface $\{u=u(P)\}$ as follows: 
$$
\nu(P)=\frac{\nabla_{\mathbb{H}^n} u(P)}{|\nabla_{\mathbb{H}^n} u(P)|}.
$$
We introduce in the Heisenbeg group $\mathbb{H}^n$ the following gauge norm:
$$
d_G(x,y,t)\equiv||(x,y,t)||=\sqrt[4]{(|x|^2+|y|^2)^2+t^2}.
$$
In particular for every positive number $r$ the gauge ball of radius $r$ centerd in $0$ is
$$
B(0,r)=\{P\in \mathbb{H}^n :\:\:\|P\|<r\}.
$$
In the Heisenberg group a group of dilation is also defined as follows: for every $r>0$ and for every $P\in \mathbb{H}^n$ let
$$
\delta_r(P)=(rx,ry,r^2t).
$$

Let $(\xi,\eta,\sigma)\in\mathbb{H}^n,$ then
 $$d_G(\xi,\eta,t)=\sqrt[4]{(|\xi|^2+|\eta|^2)^2+\sigma^2}=d_K((\xi,\eta,\sigma),(0,0,0)).$$ 
 In particular, for every $i=1,\dots,n$,
\begin{equation}\begin{split}
X_id_G&=\frac{1}{4}((| \xi|^2+|\eta|^2)^2+\sigma^2)^{-\frac{3}{4}}(4(|\xi|^2+|\eta|^2)\xi_i+4\sigma\eta_i)=\frac{1}{4}d_G^{-3}(4(|\xi|^2+| \eta|^2)\xi_i+4\sigma\eta_i)\\
&=d_G^{-3}((|\xi|^2+|\eta|^2)\xi_i+\sigma\eta_i)
\end{split}
\end{equation}
and
$$
Y_id_G=\frac{1}{4}((|\xi|^2+|\eta|^2)^2+t^2)^{-\frac{3}{4}}(4(|\xi|^2+|\eta|^2)\eta_i-4\sigma\xi_i)= d_G^{-3}( (|\xi|^2+|\eta|^2)\eta_i- \sigma\xi_i).
$$
Moreover, for every $i=1,\dots,n:$
$$
X_i^2d_G=-3d_G^{-7}((|\xi|^2+|\eta|^2)\xi_i+\sigma\eta_i)^2+d_G^{-3}(2\xi_i^2+(|\xi|^2+|\eta|^2)+2\eta_i^2)
$$
and
$$
Y_i^2d_G=-3d_G^{-7}((|\xi|^2+|\eta|^2)\eta_i-\sigma\xi_i)^2+d_G^{-3}(2\eta_i^2+(|\xi|^2+|\eta|^2)+2\xi_i^2).
$$
As a consequence,
\begin{equation}\begin{split}
&|\nabla_{\mathbb{H}^n}d_G|^2=\sum_{i=1}^n\left((X_id_G)^2+(Y_id_G)^2\right)=d_G^{-6}\sum_{i=1}^n\left( (|\xi|^2+|\eta|^2)^2(\xi_i^2+\eta_i^2)+\sigma^2(\xi_i^2+\eta_i^2)\right)\\
&=d_G^{-6}\left((|\xi|^2+|\eta|^2)^3+\sigma^2(|\xi|^2+|\eta|^2)\right)
=(|\xi|^2+|\eta|^2)d_G^{-2},
\end{split}
\end{equation}
and
\begin{equation}\begin{split}
\Delta_{\mathbb{H}}d_G&=-3d_G^{-7}\left((|\xi|^2+|\eta|^2)^3+\sigma^2(|\xi|^2+|\eta|^2)\right)+(2n+4)d_G^{-3}(|\xi|^2+|\eta|^2)\\
&=(2n+1)(|\xi|^2+|\eta|^2)d_G^{-3}.
\end{split}
\end{equation}

Following the above calculations, for every $i=1,\dots,n$, we have
$$
X_id_G^{2-Q}=(2-Q)d_G^{1-Q}\left(d_G^{-3}((|\xi|^2+|\eta|^2)\xi_i+\sigma\eta_i)\right)
$$
and

$$
Y_id_G^{2-Q}=(2-Q)d_G^{1-Q}\left( d_G^{-3}( (|\xi|^2+|\eta|^2)\eta_i- \sigma\xi_i)\right).
$$
Thus
\begin{equation}\begin{split}
\Delta_{\mathbb{H}}d_G^{2-Q}&=(2-Q)(1-Q)d_G^{-Q}
\sum_{i=1}^n\left((X_id_G)^2+(Y_id_G)^2\right)+(2-Q)d_G^{1-Q}\sum_{i=1}^n\left(X_i^2d_G+Y_i^2d_G\right)\\
&=(2-Q)(1-Q)d_G^{-Q}
\mid\nabla_{\mathbb{H}^n}d_G\mid^2+(2-Q)d_G^{1-Q}\Delta_{\mathbb{H}^n}d_G\\
&=(2-Q)\left((1-Q)d_G^{-2-Q}(|\xi|^2+|\eta|^2) +d_G^{-2-Q}(2n+1)(|\xi|^2+|\eta|^2)\right)\\
&=(2-Q)d_G^{-2-Q}(|\xi|^2+|\eta|^2)\left(1-Q + 2n+1\right)=0,\end{split}
\end{equation}
whenever $Q=2n+2.$ That is, $d_G^{2-Q}$ is, up to a constant, the fundamental solution of the sublaplacian in the Heisenberg group.

We define the symmetrized horizontal Hessian matrix of the smooth function $u$ at $P$ the following $2n\times2n$ matrix:
$$
D_{\mathbb{H}^n}^{2*}u(P)=\left[\begin{array}{lrlr}
X_1^2u(P),&\dots,X_1X_nu(P),&\frac{X_1Y_1+Y_1X_1}{2}u(P),&\dots\frac{X_1Y_n+Y_nX_1}{2}u(P)\\
X_2X_1u(P),&\dots,X_2X_nu(P),&\frac{X_2Y_1+Y_1X_2}{2}u(P),&\dots\frac{X_2Y_n+Y_nX_2}{2}u(P)\\
\vdots\\
X_nX_1u(P),&\dots,X_n^2u(P),&\frac{X_nY_1+Y_1X_n}{2}u(P),&\dots\frac{X_nY_n+Y_nX_n}{2}u(P)\\
\frac{Y_1X_1+X_1Y_1}{2}u(P),&\dots,\frac{Y_1X_n+X_nY_1}{2}u(P),&Y_1^2u(P),&\dots,Y_1Y_nu(P)\\
\frac{Y_2X_1+X_1Y_2}{2}u(P),&\dots,\frac{Y_2X_n+X_nY_2}{2}u(P),&Y_2Y_1u(P),&\dots,Y_2Y_nu(P)\\
\vdots\\
\frac{Y_nX_1+X_1Y_n}{2}u(P),&\dots,\frac{Y_nX_n+X_nY_n}{2}u(P),&Y_nY_1u(P),&\dots,\dots,Y_n^2u(P)
\end{array}\right].
$$
Now we neeed the Taylor's formula adapted to our framework. 
Let $u$ be a smooth function   defined in an open set $\Omega\subset\mathbb{H}^n$ neighborhood of $0.$ Let  $\varepsilon_0$ be  a positive small number such that  
for every $0\leq s\leq 1,$ $\delta_{s}(P)\in \Omega.$  In such a way  the function
$$
g(s)=u(\delta_{s}(P))=u(s x,s y,s^2 t)
$$
is well defined for every $s\in[0,1].$ 
By the classical Taylor's formula centered at $0$, we get
$$
g(s)=g(0)+g'(0)s+\frac{1}{2}g''(0)s^2+\frac{1}{6}g'''(0)\bar{s},
$$
where $s\in(0,1).$
In particular $g(0)=u(0),$
so that
\begin{equation}
\begin{split}
g'(s)&=\sum_{i=1}^n(\partial_{x_i}u(\delta_{s}(P))x_i+\partial_{y_i}u(\delta_{s}(P))y_i)+2st\partial_tu(\delta_{s}(P))\\
&=\sum_{i=1}^n(\partial_{x_i}u(\delta_{s}(P))x_i+\partial_{y_i}u(\delta_{s}(P))y_i+2x_iy_i\partial_tu(\delta_{s}(P))-2x_iy_i\partial_tu(\delta_{s}(P)))+2st\partial_tu(\delta_{s}(P))\\
&=\sum_{i=1}^n(X_iu(\delta_{s}(P))x_i+Y_iu(\delta_{s}(P))y_i)+2st\partial_tu(\delta_{s}(P))\\
&=\langle\nabla_{\mathbb{H}^n}u(\delta_{s}(P)),(x,y)\rangle+2st\partial_tu(\delta_{s}(P)),
\end{split}
\end{equation}
and
\begin{equation}
\begin{split}
g''(s)
&=\sum_{i=1,j=1}^n(X_jX_iu(\delta_{s}(P))x_ix_j+Y_jX_iu(\delta_{s}(P))x_iy_j+X_jY_iu(\delta_{s}(P))y_ix_j+Y_jY_iu(\delta_{s}(P))y_iy_j\\
&+2st(\partial_tX_iu(\delta_{s}(P))+\partial_tY_iu(\delta_{s}(P))))+2t\partial_tu(\delta_{s}(P))+4s^2t\partial_{tt}u(\delta_{s}(P))\\
&=\langle D_{\mathbb{H}^n}^{2*}u(\delta_{s}(P))(x,y),(x,y)\rangle\\
&+2st(\partial_tX_iu(\delta_{s}(P))+\partial_tY_iu(\delta_{s}(P)))+2t\partial_tu(\delta_{s}(P))+4s^2t\partial_{tt}u(\delta_{s}(P)),
\end{split}
\end{equation}
and
\begin{equation*}
\begin{split}
g''(0)=\langle D_{\mathbb{H}^n}^{2*}u(0)(x,y),(x,y)\rangle+2t\partial_tu(0).
\end{split}
\end{equation*}
and by analogous calculation, for $\|(x,y,t)\|^2\leq\varepsilon_0,$ it results:
$$
|g'''(0)\bar{s}|\leq C\|(x,y,t)\|^2,
$$
where
$$
\|(x,y,t)\|=\sqrt[4]{(|x|^2+|y|^2)^2+t^2}.
$$
Hence by taking $s=1$ we get:
\begin{equation}\label{svilu}
u(x,y,t)=u(0)+\langle\nabla_{\mathbb{H}^n}u(0),(x,y)\rangle+\frac{1}{2}
\left(\langle D_{\mathbb{H}^n}^{2*}u(0)(x,y),(x,y)\rangle+2t\partial_tu(0)\right)+o(\|(x,y,t)\|^2).
\end{equation}
If $P\in\mathbb{H}^n$ and $$V\in\mathfrak{g}=\mbox{span(Lie)}\{X_{i},Y_j,[X_i,Y_j]:\:\:i,j=1,\dots,n\}$$ we set
$\vartheta_{(V,p)}(s):=\mbox{exp}[sV](P)\, (s\in\mathbb{R})$, i.e.
$\vartheta_{(V,p)}$ denotes the integral curve of $V$ starting
from $P$ and it turns out to be a {\it{1-parameter sub-group}} of $\mathbb{H}^n$. The
{\it{Lie group exponential map}} is defined by
$$\mbox{exp}:\mathfrak{g}\longmapsto\mathbb{H}^n,\quad\mbox{exp}(V):=\mbox{exp}[V](1).$$
The map $\mbox{exp}$ is an analytic diffeomorphism between $\mathfrak{g}$ and $\mathbb{H}^n.$
 One has
$$\vartheta_{(V,P)}(s)=P\circ\mbox{exp}(s
V)\quad\forall\,\,s\in\mathbb{R}.$$

In particular we remark that if $U\in H\mathbb{H}^n(P),$ then
$$
\vartheta_{(U,P)}(t)=P\circ\mbox{exp}(s
U),
$$
is horizontal.

Indeed, we say that a path $\phi:[-\tau,\tau]\to \mathbb{H}^n$ in the Heisenberg group is horizontal if $\phi'(s)\in H\mathbb{H}^n(\phi(s))$ for almost all $s\in [-\tau,\tau].$

Concerning the natural Sobolev spaces to consider in the Heisenberg group $\mathbb{H}^n$, we refer to the literature, see for instance \cite{GN}. Here, we only recall that 
$$
\mathcal{L}^{1,2}(\Omega):=\{f\in L^{2}(\Omega): X_if,\:\:Y_if\in L^{2}(\Omega),\:\:i=1,\dots, n\}
$$ 
is a Hilbert space with the norm
$$
|f|_{\mathcal{L}^{1,2}(\Omega)}=\left(\int_{\Omega}(\sum_{i}^n(X_if)^2+(Y_if)^2)+|f|^2dx\right)^{\frac{1}{2}}.
$$
Moreover,
$$
 H_{\mathbb{H}^n}^1(\Omega)=\overline{C^{\infty}(\Omega)\cap \mathcal{L}^{1,2}(\Omega)}^{|\cdot |_{\mathcal{L}^{1,2}(\Omega)}}.
$$
$$
 H_{\mathbb{H}^n,0}^1(\Omega)=\overline{C^{\infty}_0(\Omega)}^{|\cdot |_{\mathcal{L}^{1,2}(\Omega)}}.
$$
Of course, on the Sobolev-Poincar\'e inequalites there exists a wide literature, see e.g. \cite{Jerison_P}, \cite{FraLan}, \cite{CaDaGa}, \cite{Lu}. However here we shall recall only the following one in the Heisenberg group for every $u\in H_{\mathbb{H}^n, 0}^1(B_r)$
$$
\int_{B_r} |u(x)|dx \leq Cr \int_{B_r} |\nabla_{\mathbb{H}^n}u(x)|dx,
$$
see also \cite{GN} for isoperimetric and Sobolev inequalities in more general situations.

In general this presentation makes sense also for more stratificated non-commutative structures: the Carnot groups. In fact,
let $(\mathbb{G},\circ)$ be a group and  
there exist $\{\mathfrak{g}_{i}\}_{1\leq i\leq m},$ $m\in\mathbb{N},$ $m\leq N\in \mathbb{N},$ vector spaces such that:
$$
\mathfrak{g}_1\bigoplus \mathfrak{g}_2\bigoplus\dots\bigoplus \mathfrak{g}_m=\mathfrak{g}\equiv \mathbb{R}^N\equiv\mathbb{G}
$$
and
$$
[\mathfrak{g}_1,\mathfrak{g}_1]=\mathfrak{g}_2,\quad [\mathfrak{g}_1,\mathfrak{g}_2]=\mathfrak{g}_3,\quad\dots, [\mathfrak{g}_1,\mathfrak{g}_{m-1}]={g}_m,
$$
where
$$
[\mathfrak{g}_1,\mathfrak{g}_{m}]={0}.
$$

In this case we say that $\mathbb{G}$ is a stratified Carnot group of step $m.$

Moreover for every
$$
x\in \mathbb{G}\equiv\mathbb{R}^N=\mathbb{R}^{k_1}\times\dots\mathbb{R}^{k_m},\quad \sum_{j=1}^{m}k_i=N
$$
and for every $\lambda>0$ is defined the anisotropic dilation:
$$
\delta_{\lambda}(x)=(\lambda x^{(1)},\lambda^2 x^{(2)},\dots,\lambda^mx^{(m)}),\quad \mbox{where}\quad x^{(j)}\in\mathbb{R}^{k_j},\quad j=1,\dots,m
$$
such that, if  $Z_1,\dots,Z_{k_1}\in g_1$ are left invariant vector fields and $Z_j(0)=\frac{\partial}{\partial x_j}_{|x=0},$ $j=1,\dots,k_1,$ then
$$
\mbox{rank}(\mbox{Lie}\{Z_1,\dots,Z_{k_1}\})(x)=N, \quad (\mbox{H\"ormander condition})
$$
for every $x\in \mathbb{R}^N\equiv\mathbb{G}.$
 Let us consider the sublaplacian on the stratified Carnot group $\mathbb{G}$ given by
 $$
 \Delta_{\mathbb{G}}=\sum_{j=1}^{k_1}X_j^2.
 $$
 In particular there exists a $ N\times k_1$ matrix $\sigma$ such that $\sigma\cdot\sigma^T$ is a $N\times N$ matrix such that
 \begin{equation}\label{rap}
 \mbox{div}(\sigma\cdot\sigma^T\nabla \cdot)= \Delta_{\mathbb{G}}.
 \end{equation} 
 Moreover,
 $$
 \sigma^T\nabla u =\sum_{j=1}^{k_1}X_juX_j\equiv\nabla_{g_1}u,
 $$
 the so called horizontal gradient of $u.$
Hence
 $$
 A=\sigma\cdot\sigma^T.
 $$
 The Heisenberg group $\mathbb{H}^1$ is an example of Carnot group of step $2.$ In fact
 $$
 \mathfrak{g}_1=\mbox{span}\{X,Y\},\quad \mathfrak{g}_2=\mbox{span}\{[X,Y]\}, \quad [\mathfrak{g}_1,\mathfrak{g}_2]=\{0\}
 $$
and the Lie algebra of the Heisenberg group is obtained as 
$$
\mathfrak{g}=\mbox{span(Lie)}\{X,Y,[X,Y]\}=\mathfrak{g}_1\bigoplus\mathfrak{g}_2.
$$
Exploiting the cited representation of the sublaplacians \eqref{rap}, it results that
$$
\langle A\nabla u,\nabla u\rangle_{\mathbb{R}^N}=\langle \sigma\cdot\sigma^T\nabla u,\nabla u\rangle_{\mathbb{R}^N}=\langle\sigma^T\nabla u,\sigma^T\nabla u\rangle_{\mathbb{R}^{k_1}}\equiv \langle\nabla_{\mathbb{G}} u,\nabla_{\mathbb{G}} u\rangle_{\mathbb{R}^{k_1}},
$$
where $\nabla_{\mathbb{G}} u\equiv\sigma^T\nabla u=\sum_{j=1}^{k_1}X_juX_j$ is the horizontal gradient in the Carnot group $\mathbb{G}.$ Thus the definition by completion of the Sobolev spaces with respect to the norm
$$
||u||_{\mathbb{H}_{1,2}(\mathbb{G})}:=\sqrt{\int_{\Omega}\langle A\nabla u,\nabla u\rangle_{\mathbb{R}^N}+\int_{\Omega}u^2}\equiv\sqrt{\int_{\Omega}\langle\nabla_{\mathbb{G}} u,\nabla_{\mathbb{G}} u\rangle_{\mathbb{R}^{k_1}}+\int_{\Omega}u^2}
$$
is the same.

We spend few words about the Carnot-Charath\'eodory distance. To do this goal, we recall, see e.g. \cite{BLU}, that if $\{X_1,\dots X_N\}$  are vector fields in $\mathbb{R}^n,$ a piecewise regular path $\eta:[0,T]\to\mathbb{R}^n$ is said subunit, with respect to the family  $\{X_1,\dots X_N\},$ if for every $\xi\in \mathbb{R}^n$
$$
\langle \eta'(t),\xi\rangle^2\leq \sum_{j=1}^N\langle X_j(\eta(t)),\xi\rangle^2 ,\quad \mbox{for a.e.}\:\:t\in [0,T].
$$
Let us denote by $\mathcal{S}:=\mathcal{S}(\{X_1,\dots X_N\})$ the set of all the subunit paths.
\begin{prop}[Chow-Rashevsky] Let $\mathbb{G}=(\mathbb{R}^n,\circ, \delta_\lambda)$ be a Carnot group with the Lie algebra $\mathfrak{g}$ and let $\{X_1,\dots X_N\}$ be a family of vector fields in $\mathbb{R}^n.$
If 
$$
\mathfrak{g}=\mbox{Lie}\{X_1,\dots X_N\},
$$
then for every $x,y\in \mathbb{R}^n$ there exists $\eta\in \mathcal{S}$ such that $\eta(0)=x,$ $\eta(T)=y,$ moreover
$$
d_{CC}(x,y):=\inf\{T>0:\quad \mbox{there exists}\:\:\eta:[0,T]\to \mathbb{R}^n,\:\:\eta\in \mathcal{S},\:\: \eta(0)=x,\:\: \eta(T)=y\}
$$
is a distance called the Carnot-Charath\`eodory distance associated with $\{X_1,\dots X_N\}$ and $d_{CC}(\cdot,0)$ is a homogeneous norm on $\mathbb{G}.$ 
\end{prop}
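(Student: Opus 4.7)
The plan is to prove the proposition in three stages: Chow connectivity (existence of a subunit path joining any two points), the metric axioms for $d_{CC}$, and finally the homogeneous-norm structure of $d_{CC}(\cdot,0)$, which is specific to the Carnot setting. For the first stage I would fix $x\in\R^n$ and study the reachable set $R(x)\subset\R^n$ consisting of all endpoints of finite concatenations of flows of $\pm X_1,\ldots,\pm X_N$ starting at $x$. The key computational input is the Taylor expansion of a commutator of flows,
\begin{equation*}
\esp(-tX_j)\circ\esp(-tX_i)\circ\esp(tX_j)\circ\esp(tX_i)(p)=p+t^2[X_i,X_j](p)+O(t^3),
\end{equation*}
and the analogous formulas for iterated commutators of higher order, which realize each bracket in $\mathfrak{g}$ up to a remainder of higher order in $t$. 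Under the H\"ormander hypothesis $\mathfrak{g}=\mbox{Lie}\{X_1,\ldots,X_N\}$, one can select $n$ iterated brackets whose values at $x$ span $T_x\R^n$, and an implicit-function-theorem argument applied to the associated multi-parameter family of flow compositions yields that $R(x)$ contains an open neighbourhood of $x$. Symmetry of the generating family $\pm X_j$ and the connectedness of $\R^n$ then force $R(x)=\R^n$. Arc-length parametrizing each individual flow converts the concatenation into a subunit path, establishing existence.

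The metric axioms for $d_{CC}$ would then be essentially formal. Symmetry follows from the time reversal $\eta\mapsto\eta(T-\cdot)$, which remains subunit since the defining inequality is quadratic in $\eta'$. The triangle inequality comes from concatenating two $\varepsilon$-almost-minimizing paths. For separation, $d_{CC}(x,y)=0\Rightarrow x=y$, I would fix a compact neighbourhood $K$ of $x$ on which $|X_j(p)|\leq M_K$ for all $j$; testing the subunit condition with $\xi=\eta'(t)/|\eta'(t)|$ gives $|\eta'(t)|\leq\sqrt{N}\,M_K$, hence $|y-x|\leq\sqrt{N}\,M_K\,T$ for every subunit path from $x$ to $y$ of length $T$ that remains in $K$, forcing $y=x$ when the infimum vanishes.

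For the final stage I set $\|x\|:=d_{CC}(x,0)$ and use left invariance together with the dilations $\delta_\lambda$. Since $X_1,\ldots,X_N$ are left invariant, left translation maps subunit paths to subunit paths of the same length, so $d_{CC}(z\circ x,z\circ y)=d_{CC}(x,y)$; combined with the triangle inequality this gives $\|x\circ y\|\leq\|x\|+\|y\|$ and, by symmetry of the metric, $\|x^{-1}\|=\|x\|$. Because the generators lie in $\mathfrak{g}_1$ and are $\delta_\lambda$-homogeneous of degree $1$, the rescaling $s\mapsto\delta_\lambda(\eta(s/\lambda))$ sends subunit paths on $[0,T]$ to subunit paths on $[0,\lambda T]$, giving $\|\delta_\lambda x\|=\lambda\|x\|$. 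Non-degeneracy $\|x\|=0\Leftrightarrow x=0$ is exactly the separation property established above.

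The hard part of this plan is clearly the openness step in Chow's theorem: one must inductively choose iterated brackets of possibly different orders that together span $T_x\R^n$, design a multi-parameter map built from their approximating flow compositions, and check that its differential at the origin is surjective. The bookkeeping of the remainder terms across bracket orders of different weights is the technical core; once that is in place, the metric axioms and the homogeneity statement follow by routine arguments that exploit only the left invariance of the generators and the $\delta_\lambda$-homogeneity of the horizontal layer $\mathfrak{g}_1$.
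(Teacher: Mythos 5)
The paper does not actually prove this proposition: it is quoted as a classical background fact (the Chow--Rashevsky theorem) with the surrounding machinery referred to \cite{BLU}, so there is no in-paper argument to compare yours against. Your outline is the standard textbook proof, and its overall architecture --- reachable sets and commutators of flows for connectivity, formal verification of the metric axioms, then left invariance plus $\delta_\lambda$-homogeneity of the first-layer generators for the homogeneous-norm structure --- is the right one. The second and third stages are fine as written: the bound $|\eta'(t)|\leq\sqrt{N}\,M_K$ obtained by testing the subunit inequality with $\xi=\eta'(t)/|\eta'(t)|$ is exactly how one gets separation (together with the observation that a short path cannot leave the compact set $K$), and the rescaling $s\mapsto\delta_\lambda(\eta(s/\lambda))$ does carry subunit paths to subunit paths precisely because the generators are left invariant and homogeneous of degree one.

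The one step that would fail as literally stated is the openness argument. The commutator-of-flows composition realizes $[X_i,X_j]$ only at order $t^2$, and an iterated bracket of length $k$ only at order $t^k$, so the differential at the origin of your multi-parameter endpoint map is \emph{not} surjective: it vanishes in every bracket direction, and the implicit function theorem cannot be applied there. The standard repairs are either to reparametrize each bracket coordinate by a fractional power $t\mapsto t^{1/k}$ (with a sign splitting to recover both $\pm$ bracket directions) and replace the implicit function theorem by a topological open-mapping or degree argument, or else to differentiate the endpoint map at a generic nearby parameter value $t_0\neq 0$ where the rank is maximal and conclude by an orbit (Sussmann--Stefan) argument. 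You correctly identify this step as the technical core, but the plan should not promise ``its differential at the origin is surjective,'' since that is precisely the assertion that is false and that forces the detour.
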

In the case of the Heisenberg group, there exist positive constants $C_1,C_2>0$ such that for every $P\in \mathbb{H}^n$
$$
C_1\|P\|_{\mathbb{H}^n}\leq d_{CC}(P,0)\leq C_2\|P\|_{\mathbb{H}^n}.
$$
The same equivalence may be extended to Carnot groups, simply by considering the right homogeneous norm versus the Carnot-Charath\`eodory distance in the considered group $\mathbb{G}$. In addition,  a strong maximum principle holds, see \cite{Bony}, even if $\Delta_{\mathbb{G}}$ is a degenerate operator.
\begin{prop}
Let $u$ be  such that $\Delta_{\mathbb{G}}u\geq 0$ in $\Omega\subset \mathbb{G}$ is an open set and $\mathbb{G}$ is a group whose Lie algebra $\mathfrak{g}$ satisfies the H\"ormander condition. Then the supremum of $u$ can not be realized in $\Omega$ unless $u$ is constant.
\end{prop}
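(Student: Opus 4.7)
The plan is to combine a horizontal maximum-propagation principle with the Chow--Rashevsky connectivity recalled in the preceding proposition. I will assume $\Omega$ is connected (otherwise argue on each component) and suppose, for contradiction, that $u$ attains its supremum $M$ at some interior point while failing to be constant. Setting
$$
E := \{x \in \Omega : u(x) = M\},
$$
continuity of $u$ makes $E$ relatively closed in $\Omega$, and its complement in $\Omega$ is nonempty by assumption; it therefore suffices to prove that $E$ is also open, contradicting the connectedness of $\Omega$.

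The heart of the proof is the following horizontal propagation statement: for every $x_0 \in E$ and every generator $X_j$ of the first stratum ($j=1,\dots,k_1$), the integral curve $t \mapsto \exp(tX_j)(x_0)$ lies in $E$ for all sufficiently small $|t|$ with image in $\Omega$. A naive restriction argument fails because the hypothesis $\sum_k X_k^2 u \geq 0$ does not entail $X_j^2 u \geq 0$ along a single flow line; the cross terms are not under control. Following the strategy of Bony, I would argue by contradiction: assuming some point on the curve falls outside $E$, one produces, inside a small tube around the curve parametrized by the horizontal flows of $X_1,\dots,X_{k_1}$, a local barrier of the form $w := 1 - e^{-\lambda \rho}$, where $\rho$ is a carefully chosen nonnegative smooth function vanishing on the axial curve, tuned so that $\Delta_{\mathbb{G}} w \geq c > 0$ on the tube for $\lambda$ sufficiently large. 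Applying the classical Hopf boundary-point lemma to $u + \varepsilon w - M$ at a point where the sublevel set $\{w < \delta\}$ touches $E$ yields a strict sign for the horizontal outward derivative, contradicting the interior maximum at the touching point.

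Once horizontal propagation is in hand, the Chow--Rashevsky proposition recalled above closes the argument: any $y$ in a small neighborhood of $x_0 \in E$ can be joined to $x_0$ by a subunit path, which, by standard approximation, is reached from $x_0$ via a finite concatenation of integral curves of the horizontal generators $X_1,\dots,X_{k_1}$. Iterating the propagation statement along each piece of the concatenation forces $y \in E$, so $E$ is open in $\Omega$, completing the contradiction.

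The main obstacle is unquestionably the barrier construction in the middle step: since $\Delta_{\mathbb{G}}$ is degenerate in every direction transverse to the horizontal distribution, Euclidean paraboloids or the gauge distance $d_G$ do not, by themselves, furnish strict classical subsolutions adapted to the axial curve. The function $\rho$ must be assembled from powers of horizontal coordinates together with iterated commutator coordinates so that $\Delta_{\mathbb{G}} \rho$ picks up a positive contribution from every direction normal to the curve; this is precisely where the H\"ormander bracket-generating condition enters analytically, and not merely topologically through connectivity.
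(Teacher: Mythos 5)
The paper does not actually prove this proposition: it is stated as a known result and attributed to Bony's paper (cited as \cite{Bony}), so there is no internal argument to compare against. Your overall plan --- show the maximum set $E$ is open by propagating the maximum along integral curves of the horizontal generators and then invoking Chow--Rashevsky connectivity --- is indeed the skeleton of Bony's proof, so the strategy is the right one.

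However, as written your argument has a genuine hole exactly where you flag it: the barrier is never constructed, only described by the properties it ``must'' have. Asserting the existence of a smooth $\rho \geq 0$ vanishing on the axial curve, built from ``powers of horizontal coordinates together with iterated commutator coordinates,'' such that $\Delta_{\mathbb{G}}(1-e^{-\lambda\rho}) \geq c > 0$ on a tube is not a proof step; it is the entire difficulty, and it is not clear the object you describe exists (note that $\Delta_{\mathbb{G}}\rho$ cannot ``pick up a positive contribution from every direction normal to the curve,'' since the operator annihilates second-order behaviour in all non-horizontal directions at a point --- no choice of $\rho$ makes the barrier uniformly elliptic-like transverse to the distribution). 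The classical argument avoids this entirely: one first proves a \emph{tangency lemma} using the ordinary Euclidean barrier $w = e^{-\lambda|x-y|^2} - e^{-\lambda r^2}$ on a Euclidean ball $B(y,r)$ contained in $\{u < M\}$ and touching $E$ at a single point $z$; computing $\Delta_{\mathbb{G}} w$ shows it is strictly positive near $z$ for large $\lambda$ \emph{unless} $\langle X_j(z), z-y\rangle = 0$ for every horizontal generator $X_j$, and the Hopf comparison then forces exactly this tangency. A separate ODE/Gronwall argument on $t \mapsto \mathrm{dist}(\gamma(t), E)$ converts the tangency of the $X_j$ to $E$ into invariance of $E$ under their flows. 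In particular, contrary to your closing remark, the H\"ormander condition enters only topologically, through Chow connectivity of the orbit; the propagation along each individual $X_j$ requires nothing of the brackets. To complete your proof you would need to either carry out your tube construction in detail (which I do not believe can be done as stated) or replace that middle step with the tangency-plus-Gronwall argument just described.
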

\section{The Bernoulli functional in the Heisenberg group}
In this section, following the scheme of \cite{ACF} we make some computations in the Heisenberg group $\mathbb{H}^n,$ but using the same arguments, the final results apply also to Carnot groups. In particular, here we recall that local minima of our functionals are globally continuous.
Let  
$$
J_{\mathbb{H}^n}(v)=\int_{\Omega}\left(\mid\nabla_{\mathbb{H}^n}v\mid^2+q^2(x)\lambda^2(v)+2fv\right)dx,\quad v\in K
$$
be the functional that we will study, where
$q^2(x)\not=0,$
\begin{equation}
\lambda(v)=\left\{
\begin{array}{l}
\lambda_1^2,\quad\mbox{if}\quad v<0,\\
\lambda_2^2,\quad\mbox{if}\quad v>0,
\end{array}
\right.
\end{equation}
and $\lambda^2(v)$ is lower semicontinuous at $v=0;$ it is assumed that $\lambda_i^2>0$ and $\Lambda=\lambda_1^2-\lambda_2^2\not=0.$
Here is
$$
K=\{v\in L_{\mbox{loc}}^1(\Omega):\:\:\nabla_{\mathbb{H}^n}v\in L^2(\Omega),\:\:v=u^0\:\: \text{on}\:\:S\subset\partial\Omega\}
$$
and $\Omega\subset \mathbb{R}^n $ is a domain.

There exists a unique solution to the following Dirichlet problem 
\begin{equation}
\left\{
\begin{array}{l}
\Delta_{\mathbb{H}^n}v_R=0,\quad B_R(0)\\
v_R=u,\quad\partial B_R(0).
\end{array}
\right.
\end{equation}

If $u$ realises a minimum for $J_{\mathbb{H}^n},$ then for every ball $B_r\subset \Omega$ we get:
\begin{equation*}
\int_{B_r(0)}\left(\mid\nabla_{\mathbb{H}^n}u\mid^2+q^2(x)\lambda^2(u)+2fu\right)dx\leq \int_{B_r(0)}\left(\mid\nabla_{\mathbb{H}^n}v_r\mid^2+q^2(x)\lambda^2(v_r)+2fv_r\right)dx.
\end{equation*}
Hence by the Poincar\' e ineguality we get:
\begin{equation*}\begin{split}
&\int_{B_r(0)}\left(\mid\nabla_{\mathbb{H}^n}u\mid^2-\mid\nabla_{\mathbb{H}^n}v_r\mid^2\right)dx\leq \int_{B_r(0)}\left(q^2(x)\lambda^2(v_r)-q^2(x)\lambda^2(u)\right)+2f(v_r-u)dx\\
&\leq C(\lambda_1,\lambda_2,Q)r^Q+2\int_{B_r(0)}f(v_r-u)dx.
\end{split}
\end{equation*}
On the other hand,
\begin{equation*}\begin{split}
&\int_{B_r(0)}\langle \nabla_{\mathbb{H}^n}(u-v_r),\nabla_{\mathbb{H}^n}(u+v_r)\rangle dx= \int_{B_r(0)}\mid\nabla_{\mathbb{H}^n}(u-v_r)\mid^2+
2\int_{B_r(0)}\langle\nabla_{\mathbb{H}^n}(u-v_r),\nabla_{\mathbb{H}^n}v_r\rangle\\&=\int_{B_r(0)}|\nabla_{\mathbb{H}^n}(u-v_r)|^2-2\int_{B_r(0)}f(u-v_r)dx
\end{split}
\end{equation*}
and
\begin{equation*}\begin{split}
&\int_{B_r(0)}\langle \nabla_{\mathbb{H}^n}(u-v_r),\nabla_{\mathbb{H}^n}(u+v_r)\rangle dx=\int_{B_r(0)}\left(|\nabla_{\mathbb{H}^n}u|^2-|\nabla_{\mathbb{H}^n}v_r|^2\right)dx.
\end{split}
\end{equation*}
Hence

\begin{equation*}\begin{split}
 \int_{B_r(0)}|\nabla_{\mathbb{H}^n}(u-v_r)|^2=&\int_{B_r(0)}\left(|\nabla_{\mathbb{H}^n}u|^2-|\nabla_{\mathbb{H}^n}v_r|^2\right)dx+2\int_{B_r(0)}f(u-v_r) dx. \end{split}
\end{equation*}
That is, by H\"older inequality

\begin{equation*}\begin{split}
 \int_{B_r(0)}|\nabla_{\mathbb{H}^n}(u-v_r)|^2\leq&C(\lambda_1,\lambda_2,Q)r^Q+4\|f\|_{L^Q(B_r(0))}\Big ( \int_{B_r(0)}|(u-v_r)|^2\Big )^{1/2}r^{\frac{Q-2}{2}}\end{split}
\end{equation*}
and, recalling Sobolev-Poincar\' e inequality one more time, we get:

\begin{equation*}\begin{split}
 \int_{B_r(0)}|\nabla_{\mathbb{H}^n}(u-v_r)|^2\leq&C(\lambda_1,\lambda_2,Q)r^Q+c'\|f\|_{L^Q(B_r(0))}\Big ( \int_{B_r(0)}|\nabla_{\mathbb{H}^n}(u-v_r)|^2\Big)^{1/2}r^{\frac{Q}{2}}. \end{split}
\end{equation*}
Thus, 
applying Cauchy inequality we get  for $\varepsilon>0$
\begin{equation*}\begin{split}
 \int_{B_r(0)}|\nabla_{\mathbb{H}^n}(u-v_r)|^2\leq&C(\lambda_1,\lambda_2,Q)r^Q+\frac{c'}{2\varepsilon}\|f\|_{L^Q(B_r(0))}^2r^{Q}+\frac{c'\varepsilon}{2} \int_{B_r(0)}|\nabla_{\mathbb{H}^n}(u-v_r)|^2 \end{split}
\end{equation*}
that implies
\begin{equation*}\begin{split}
(1-\frac{c'\varepsilon}{2}) \int_{B_r(0)}|\nabla_{\mathbb{H}^n}(u-v_r)|^2\leq&C(\lambda_1,\lambda_2,Q,\bar{\varepsilon}, \|f\|_{L^Q(B_r(0))}) r^Q, \end{split}
\end{equation*}
where
$$
C(\lambda_1,\lambda_2,Q,\bar{\varepsilon}, \|f\|_{L^Q(B_r(0))}):=C(\lambda_1,\lambda_2,Q)+\frac{c'}{2\varepsilon}\|f\|_{L^Q(B_r(0))}^2.
$$
Thus, by fixing $\bar{\varepsilon}>0$ such that $1-\frac{\varepsilon}{2} c'>\frac{1}{2}$ we conclude that there exists a constant $\bar{C}:=\bar{C}(\lambda_1,\lambda_2,\bar{\varepsilon}, \|f\|_{L^Q(\Omega)},Q)$ such that:
\begin{equation*}\begin{split}
 \int_{B_r(0)}|\nabla_{\mathbb{H}^n}(u-v_r)|^2\leq&\bar{C}r^Q. \end{split}
\end{equation*}


As a consequence, in analogy with the Euclidean case, we can not expect  on $u$ more than a modulus of continuity ruled by the Carnot-Charath\'eodory distance like, see the argument used by \cite{ACF}, \cite{Morrey} and \cite{LDT}:
\begin{equation*}
|u(x)-u(y)|\leq Cd_{CC}(x,y)|\log\left (\frac{1}{d_{CC}(x,y)}\right)|,
\end{equation*}
for every $x,y\in K,$ $d_{CC}(x,y)<\frac{1}{2}.$
The existence of a global Lipschitz intrinsic modulus of continuity may be face having a monotonicity formula. In $\mathbb{H}^1,$ see some partial results obtained in \cite{FeFo2_ar} and \cite{FeFo2}.
\section{Variation domains solutions for non-negative matrix}\label{variational_x}
In this section we face the general case with variable coefficients.

Let us consider the functional
$$
\mathcal{E}_A(v)=\int_{\Omega}\left(\langle A(x)\nabla v,\nabla v\rangle +M^2(v,x)+2fv\right),
$$
where $\langle A(x)\xi,\xi\rangle\geq 0$ for every $x\in \Omega,$ for every $\xi\in \mathbb{R}^n$, and
$$
M(u,x)=q(x)(\lambda_+\chi_{\{u>0\}}+\lambda_{-}\chi_{\{u<0\}}),
$$
where $\lambda_+,\lambda_-$ are non-negative numbers and $q\not\equiv 0$ is a function.

We define $\tau_\varepsilon(x)=x+\varepsilon\varphi(x)$ where $\varphi\in C_0^\infty(\Omega,\mathbb{R}^n).$ Recalling Section \ref{keytool} we remark that $A$ might be one of the matrices that is associated with a sublaplacian.

\begin{lem}
Let $u\in K$ be a local minimum of $\mathcal{E}_A.$ Then $u$ satisfies $\mbox{div}(A(x)\nabla u(x))=f$ in $\Omega\setminus\{u=0\}$
\end{lem}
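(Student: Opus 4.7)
The plan is the standard first variation argument, specialized to perturbations $u + t\phi$ with $\phi \in C_0^\infty$ supported away from the zero set of $u$. The non-smooth term $M^2(\cdot,x)$ contributes nothing to the variation because the sign of $u$ is preserved under small enough perturbations, so only the quadratic energy and the linear $f$-term survive, producing the Euler--Lagrange equation in the usual weak sense.

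Concretely, I would fix an open set $U \Subset \Omega \setminus \{u=0\}$. By the continuity of local minima (established through the harmonic replacement comparison of the previous section, which yields the log-Lipschitz modulus controlled by $d_{CC}$), $u$ has a definite sign on each connected component of $U$ and is bounded away from zero there: $|u| \geq c_0 > 0$ on $U$ for some $c_0$. For any $\phi \in C_0^\infty(U)$ and any real $t$ with $|t| < c_0/\|\phi\|_\infty$, the perturbed function $u + t\phi$ lies in $K$ (the boundary datum on $S$ is preserved since $\phi$ has compact support in $\Omega$) and satisfies $\mathrm{sign}(u+t\phi)(x) = \mathrm{sign}(u)(x)$ for every $x \in \Omega$. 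Consequently $M^2(u+t\phi,\cdot) = M^2(u,\cdot)$ pointwise, and a direct quadratic expansion yields
\[
\mathcal{E}_A(u+t\phi) - \mathcal{E}_A(u) = 2t \int_\Omega \bigl( \langle A(x)\nabla u, \nabla \phi\rangle + f\phi \bigr)\, dx + t^2 \int_\Omega \langle A(x)\nabla \phi, \nabla \phi\rangle\, dx.
\]
Local minimality forces this to be non-negative for all sufficiently small real $t$, so dividing by $t$ and taking $t \to 0^\pm$ shows that the linear coefficient vanishes:
\[
\int_\Omega \langle A(x)\nabla u, \nabla \phi\rangle\, dx + \int_\Omega f\phi\, dx = 0 \qquad \text{for every } \phi \in C_0^\infty(U).
\]
Exhausting $\Omega \setminus \{u=0\}$ by such open sets $U$, the identity extends to every $\phi \in C_0^\infty(\Omega \setminus \{u=0\})$, which is precisely the weak formulation of $\mathrm{div}(A(x)\nabla u) = f$ on that open set.

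The only substantive ingredient beyond routine calculus is the continuity of $u$, which both makes $\{u=0\}$ closed (so that $\Omega \setminus \{u=0\}$ is a legitimate open set on which to test) and provides the uniform separation from zero on $\overline{\mathrm{supp}(\phi)}$ that freezes the sign of $u+t\phi$. This is the genuine obstacle in the degenerate setting, but it is supplied by the background regularity already recalled in the previous section. I would emphasize that the argument uses nothing about positive-definiteness of $A$: only the quadratic expansion of $\langle A(x)\nabla\cdot,\nabla\cdot\rangle$ enters, so the lemma applies verbatim to the sublaplacians of Carnot groups, which is the intended setting.
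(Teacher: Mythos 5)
Your argument is correct and is essentially the paper's own proof: a first variation $u+t\phi$ with $\phi$ compactly supported in $\Omega\setminus\{u=0\}$, quadratic expansion of the energy, and vanishing of the linear coefficient. You merely make explicit a detail the paper leaves implicit, namely that continuity of $u$ freezes the sign of $u+t\phi$ for small $t$ so that the $M^2$ term drops out of the variation.
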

\begin{proof}
For every $\varphi\in C_{0}^{\infty}(\Omega\setminus\{u=0\})$ and for every $\varepsilon>0$ sufficiently small, then
\begin{equation}\begin{split}
\mathcal{E}_A(u+\varepsilon \varphi)&=\int_{\Omega}\langle A(x)\nabla u,\nabla u\rangle+ 2\varepsilon\int_{\Omega}\langle A(x)\nabla u,\nabla \varphi\rangle\\
&+\varepsilon^2\int_{\Omega}\langle A(x)\nabla \varphi,\nabla \varphi\rangle+\int_{\Omega}M^2(u+\varepsilon \varphi,x)+2\int_{\Omega}f(u+\varepsilon \varphi)\\
&=\mathcal{E}_A(u)+ 2\varepsilon\int_{\Omega}\langle A(x)\nabla u,\nabla \varphi\rangle+2\varepsilon\int_{\Omega}f\varphi +o(\varepsilon^2).
\end{split}
\end{equation}
As a consequence,
\begin{equation}\begin{split}
\frac{\mathcal{E}_A(u+\varepsilon \varphi)-\mathcal{E}_A(u)}{\varepsilon}= 2\left(\int_{\Omega}\langle A(x)\nabla u,\nabla \varphi\rangle+\int_{\Omega}f\varphi \right)+o(\varepsilon)
\end{split}
\end{equation}
and 
\begin{equation}\begin{split}
\lim_{\varepsilon\to 0^+}\frac{\mathcal{E}_A(u+\varepsilon \varphi)-\mathcal{E}_A(u)}{\varepsilon}= 2\left(\int_{\Omega}\langle A(x)\nabla u,\nabla \varphi\rangle+\int_{\Omega}f\varphi \right)=0,
\end{split}
\end{equation}
that is $\mbox{div}(A(x)\nabla u(x))=f$ in $\Omega\setminus\{u=0\}$ in the weak sense.
\end{proof}
\begin{teo}
Let $u$ be a local minimum of $\mathcal{E}_A$ and $\mbox{meas}_n(\{u=0\})=0.$ Then $u$ is a domain variation solution and for every $\varphi\in C_0(\Omega,\mathbb{R}^n)$
\begin{equation*}
\begin{split}
&0=\lim_{\epsilon\to 0}\int_{\partial\{-\epsilon<u\}}\langle \varphi,\nu\rangle(M^2- \langle A(x)\nabla u^+,\nabla u^+\rangle) d S+
\lim_{\delta\to 0}\int_{\partial\{u<\delta\}}\langle \varphi,\nu\rangle(M^2- \langle A(x)\nabla u^-,\nabla u^-\rangle) d S.
\end{split}
\end{equation*}
 \end{teo}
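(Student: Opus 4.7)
The strategy is to mimic in $n$ dimensions the one-dimensional derivation of Section 2, replacing $(v')^2$ by the quadratic form $\langle A(x)\nabla v,\nabla v\rangle$. Given $\varphi\in C_0^\infty(\Omega,\mathbb{R}^n)$, I would set $\tau_\varepsilon(x)=x+\varepsilon\varphi(x)$, which for $|\varepsilon|$ small is a diffeomorphism of $\Omega$, and define the inner variation $u_\varepsilon$ by $u_\varepsilon\circ\tau_\varepsilon=u$. Performing the change of variable $y=\tau_\varepsilon(x)$ in $\mathcal{E}_A(u_\varepsilon)$ and using
$$D\tau_\varepsilon(x)=I+\varepsilon D\varphi(x),\qquad \det D\tau_\varepsilon=1+\varepsilon\,\dive\varphi+O(\varepsilon^2),\qquad (D\tau_\varepsilon)^{-T}=I-\varepsilon(D\varphi)^T+O(\varepsilon^2),$$
together with the transformation law $\nabla u_\varepsilon(\tau_\varepsilon(x))=(D\tau_\varepsilon(x))^{-T}\nabla u(x)$, one rewrites $\mathcal{E}_A(u_\varepsilon)$ as an integral over $\Omega$ of a smooth-in-$\varepsilon$ quadratic-form part plus $M^2(u(x),\tau_\varepsilon(x))\det D\tau_\varepsilon(x)+2f(\tau_\varepsilon(x))u(x)\det D\tau_\varepsilon(x)$. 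Differentiating at $\varepsilon=0$ using that $u$ is a local minimum yields the first-variation identity
$$\int_\Omega\!\Big[-2\langle A(x)\nabla u,(D\varphi)^T\nabla u\rangle+\langle(D_x A\cdot\varphi)\nabla u,\nabla u\rangle+\langle A\nabla u,\nabla u\rangle\dive\varphi+M^2(u,x)\dive\varphi+\langle\nabla_x M^2,\varphi\rangle+2f u\,\dive\varphi+2\langle\nabla f,\varphi\rangle u\Big]dx=0,$$
which I will re-gather as $\int_\Omega\dive\!\big[(M^2-\langle A\nabla u,\nabla u\rangle)\varphi\big]\,dx$ plus bulk terms that, thanks to the preceding lemma, cancel on $\Omega\setminus\{u=0\}$.

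The second step is the passage from the volume integral to a free-boundary integral. I would split $\Omega$ as $\{u>\delta\}\cup\{u<-\epsilon\}\cup\{-\epsilon\le u\le\delta\}$ for small $\delta,\epsilon>0$. On the two open pieces, $u$ solves $\dive(A\nabla u)=f$ by the preceding lemma, so all bulk contributions that involve $\nabla u$ and $f$ combine into the divergence of $(M^2-\langle A\nabla u,\nabla u\rangle-2fu)\varphi$ (plus a term with $-2fu\dive\varphi$ that is handled by an integration by parts against $\nabla f$ to cancel the explicit $\nabla f$ term). The divergence theorem then converts each of these two volume integrals into boundary integrals over $\partial\{u>\delta\}$ and $\partial\{u<-\epsilon\}$ respectively. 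Because $|\{u=0\}|=0$, the remaining bulk integral over the strip $\{-\epsilon\le u\le\delta\}$ is $o(1)$, since its integrand is in $L^1_{\mathrm{loc}}$ and the strip shrinks to a null set.

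Finally, I would let $\epsilon,\delta\to 0^+$. On $\{u=\delta\}$ one has $\nabla u=|\nabla u^+|\nu$ with $\nu$ the outward unit normal to $\{u>\delta\}$, and by the interior elliptic regularity of $u$ on $\{u>0\}$ the normal derivative converges (in an $L^1_{\mathrm{loc}}$ or weaker sense on the $(n-1)$-dimensional trace) to the one-sided gradient $\nabla u^+$ on $\partial\Omega^+(u)\cap\Omega$; analogously on $\{u=-\epsilon\}$ one obtains $\nabla u^-$. Here $M^2$ also converges to its one-sided values $q(x)^2\lambda_+^2$ or $q(x)^2\lambda_-^2$ respectively. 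Putting everything together yields exactly the two boundary integrals in the statement.

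The main obstacle is the justification of the limit $\epsilon,\delta\to 0^+$ in the boundary integrals, because \emph{a priori} $\nabla u^\pm$ are only in $L^2$ near $\mathcal{F}(u)$ and need not have classical traces on the level sets $\{u=\pm s\}$. To handle this, I would use that, by the lemma, $u$ is $A$-harmonic with source $f$ on each side, apply a coarea-type identity ($\int\!\int_{\{u=s\}}\ldots\,dS\,ds=\int_{\text{strip}}\ldots|\nabla u|\,dx$) on a suitable family of good level sets, and invoke $|\{u=0\}|=0$ to extract a null-set of $s$ along which $\{u=s\}$ is smooth and the trace $\langle A\nabla u^\pm,\nabla u^\pm\rangle|_{\{u=s\}}$ has a well-defined limit as $s\to 0$. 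Everything else is a routine bookkeeping of derivatives of the change of variables.
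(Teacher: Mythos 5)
Your proposal follows essentially the same route as the paper: the inner variation $u_\varepsilon\circ\tau_\varepsilon=u$ with $\tau_\varepsilon=x+\varepsilon\varphi$, the change of variables and expansion of $\det D\tau_\varepsilon$ and $(D\tau_\varepsilon)^{-1}$ in $\varepsilon$, the first-variation identity written in divergence form, the splitting of $\Omega$ into $\{u>\delta\}$, $\{u<-\epsilon\}$ and the strip $\{-\epsilon\le u\le\delta\}$ whose contribution $R_{\epsilon,\delta}$ vanishes by $\mbox{meas}_n(\{u=0\})=0$, and integration by parts using the preceding lemma to kill the bulk terms. Your closing remarks on justifying the trace limits via the coarea formula address a point the paper leaves implicit, but they do not change the argument.
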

 \begin{proof}
Denoting by $u_\varepsilon$ the function such that $u_{\varepsilon}(\tau_{\varepsilon}x)=u(x)$ where $\tau_{\varepsilon}=x+\varepsilon \varphi,$ $\varphi\in C_{0}^{\infty}(\Omega,\mathbb{R}^n)$ and assuming that $A$ is smooth, we get:

\begin{equation*}
\begin{split}&J(u_\varepsilon)=\int_{\Omega} \left(\langle A(y)\nabla u_\varepsilon(y),\nabla u_\varepsilon(y)\rangle+M^2(u_\varepsilon(y), y)+2f(y)u_\varepsilon(y)\right)dy\\
&=\int _{\Omega}\left( \langle A(\tau_\varepsilon(x))\nabla u_\varepsilon(\tau_\varepsilon (x)),\nabla u_\varepsilon(\tau_\varepsilon (x))\rangle+M^2(u(\tau_\varepsilon(x)), \tau_\varepsilon(x))+2fu(\tau_\varepsilon(x))\right)|\mbox{det}J\tau_\varepsilon|dx
\end{split}
\end{equation*}
On the other hand, since
$$
J\tau_\varepsilon (x)=I+\varepsilon J\varphi,
$$
then
$$
\mbox{det}J\tau_\varepsilon=1+\varepsilon \mbox{Tr}(J\varphi)+o(\varepsilon),
$$
for $\varepsilon\to 0.$
Moreover,
$$
\nabla u(x)=\nabla (u_\varepsilon(\tau_\varepsilon (x))=\nabla u_\varepsilon(\tau_\varepsilon (x))J\tau_\varepsilon(x),
$$
hence 
$$
J\tau_\varepsilon(x)^{-1}\nabla u(x)=\nabla u_\varepsilon(\tau_\varepsilon (x)).
$$
Keeping in mind that 
$$
J\tau_\varepsilon(x)^{-1}=I-\varepsilon J\varphi+o(\varepsilon),
$$
we conclude that 
$$
J\tau_\varepsilon(x)^{-1}\nabla u(x)=(I-\varepsilon J\varphi+o(\varepsilon))\nabla u(x)=\nabla u(x)-\varepsilon J\varphi \nabla u(x)+o(\varepsilon)
$$
and since $A$ is smooth we get
$$
A(\tau_\varepsilon(x))=A(x)+\varepsilon JA(x)\varphi+o(\varepsilon).
$$
As a consequence,
\begin{equation*}
\begin{split}&\int_\Omega \Big( \langle (A(x)+\varepsilon JA(x)\varphi+o(\varepsilon))J\tau_\varepsilon(x)^{-1}\nabla u(x),J\tau_\varepsilon(x)^{-1}\nabla u(x)\rangle\\
&+ M^2(u(x), \tau_\varepsilon(x))+2f(\tau_\varepsilon (x))u(x)\Big)|\mbox{det}J\tau_\varepsilon|dx\\
&=\int_{\Omega} \left( \langle (A(x)J\tau_\varepsilon(x)^{-1}\nabla u(x),J\tau_\varepsilon(x)^{-1}\nabla u(x)\rangle+M^2(u(x), \tau_\varepsilon(x))+2f(\tau_\varepsilon (x))u(x)\right)|\mbox{det}J\tau_\varepsilon|dx\\
&+\varepsilon\int_{\Omega} \left( \langle (JA(x)\phi+o(\varepsilon))J\tau_\varepsilon(x)^{-1}\nabla u(x),J\tau_\varepsilon(x)^{-1}\nabla u(x)\rangle\right)|\mbox{det}J\tau_\varepsilon|dx\\
&=\int_{\Omega} \left( \langle (A(x)\nabla u(x),\nabla u(x)\rangle+M^2(u(x), \tau_\varepsilon(x))+2f(\tau_\varepsilon (x))u(x)\right)|\mbox{det}J\tau_\varepsilon|dx\\
-&2\varepsilon \int_{\Omega}  \langle A(x)\nabla u(x),J\varphi\nabla u(x)\rangle |\mbox{det}J\tau_\varepsilon|dx +\varepsilon\int_{\Omega}\langle JA(x)\varphi\nabla u,\nabla u\rangle|\mbox{det}J\tau_\varepsilon|dx.
\end{split}
\end{equation*}
Hence
\begin{equation*}
\begin{split}
\frac{dJ(u_\varepsilon)}{d\varepsilon}_{|\varepsilon=0}&=\int_{\Omega} \langle (A(x)\nabla u(x),\nabla u(x)\rangle+M^2(u(x), x)+2f(x)u(x)\rangle)\mbox{Tr}(J\varphi)dx\\
&-2\int_{\Omega}\langle A\nabla u,J\varphi\nabla u\rangle dx +\int_{\Omega}\langle JA\varphi\nabla u,\nabla u\rangle dx +\int_{\Omega} \left(\langle\nabla_x M^2(u(x), x),\varphi\rangle+2\langle\nabla f(x),\varphi\rangle u\right)dx\\
&=\int_{\Omega} \langle (A(x)\nabla u(x),\nabla u(x)\rangle+M^2(u(x), x)\rangle)\mbox{Tr}(J\varphi)\\
&-2\int_{\Omega}\langle A\nabla u,J\varphi\nabla u\rangle +\int_{\Omega}\langle JA\varphi\nabla u,\nabla u\rangle dx+\int_{\Omega} \langle\nabla_x M^2(u(x), x),\varphi\rangle dx-2\int_{\Omega} f(x)\langle\varphi,\nabla u\rangle dx\\
&=\int_{\Omega}\mbox{div}\left(\left( \langle A(x)\nabla u(x),\nabla u(x)\rangle dx +M^2(u,x)\right)\varphi-2\langle\varphi,\nabla u\rangle A\nabla u\right)dx.
\end{split}
\end{equation*}
Since $u$ is a local minimum, then 
\begin{equation*}
\begin{split}
-\frac{dJ(u(x+\varepsilon\varphi(x)))}{d\varepsilon}_{|\varepsilon=0}=\frac{dJ(u_\varepsilon)}{d\varepsilon}_{|\varepsilon=0}=0,
\end{split}
\end{equation*}
that is $u$ is a domain variation solution.
Hence, forevery $\varphi\in C_0^{1}(\Omega,\mathbb{R}^n)$ we have:
\begin{equation*}
\begin{split}
&\frac{dJ(u_\varepsilon)}{d\varepsilon}_{|\varepsilon=0}\\
&=0=\int_{\Omega} \langle (A(x)\nabla u(x),\nabla u(x)\rangle+M^2(u(x), x)\rangle)\mbox{div}\varphi
-2\int_{\Omega}\langle A\nabla u,J\phi\nabla u\rangle \\
&+\int_{\Omega}\langle JA\varphi\nabla u,\nabla u\rangle+\int_{\Omega} \langle\nabla_x M^2(u(x), x),\varphi\rangle-2\int_{\Omega} f(x)\langle\varphi,\nabla u\rangle.
\end{split}
\end{equation*}
Now, let us consider now $\Omega= \{x\in \Omega:\:\:u<-\epsilon\}\cup  \{x\in\Omega:\:\:u>\delta\}\cup\{x\in \Omega:\:\:-\epsilon\leq u\leq \delta\},$ where $\epsilon, \delta>0.$ Then,  integrating by parts and denoting $\Omega_{\epsilon,\delta}(u)=\{x\in \Omega:\:\:-\epsilon\leq u\leq \delta\}$  as well as
\begin{equation*}
\begin{split}
&R_{\epsilon,\delta}:=\int_{\Omega_{\epsilon,\delta}(u)} \langle (A(x)\nabla u(x),\nabla u(x)\rangle+M^2(u(x), x)\rangle)\mbox{div}\varphi
-2\int_{\Omega_{\epsilon,\delta}(u)}\langle A\nabla u,J\phi\nabla u\rangle \\
&+\int_{\Omega_{\epsilon,\delta}(u)}\langle JA\varphi\nabla u,\nabla u\rangle+\int_{\Omega_{\epsilon,\delta}(u)} \langle\nabla_x M^2(u(x), x),\varphi\rangle-2\int_{\Omega_{\epsilon,\delta}(u)} f(x)\langle\varphi,\nabla u\rangle,
\end{split}
\end{equation*}

we get:
\begin{equation*}
\begin{split}
&0=-\int_{\Omega\cap\{u<-\epsilon\}} \langle\nabla\langle (A(x)\nabla u(x),\nabla u(x)\rangle+M^2(u(x), x)\rangle),\varphi\rangle  dx\\
&+\int_{\partial\{u<-\epsilon\}}\langle A(x)\nabla u(x),\nabla u(x)\rangle+M^2(u(x), x)\langle \varphi,\nu\rangle d\sigma\\
&-\int_{\Omega\cap \{u>\delta\}} \langle\nabla\langle (A(x)\nabla u(x),\nabla u(x)\rangle+M^2(u(x), x)\rangle),\varphi\rangle  dx\\
&+\int_{\partial\{u>\delta\}}\langle A(x)\nabla u(x),\nabla u(x)\rangle+M^2(u(x), x)\langle \varphi,\nu\rangle d\sigma
-2\int_{\Omega\cap (\{u>\delta\}\cup \{u<-\epsilon\})}\langle A\nabla u,J\phi\nabla u\rangle dx\\
&+\int_{\Omega\cap (\{u>\delta\}\cup \{u<-\epsilon\})}\langle JA\varphi\nabla u,\nabla u\rangle dx+\int_{\Omega\cap (\{u>\delta\}\cup \{u<-\epsilon\})} \langle\nabla_x M^2(u(x), x),\varphi\rangle dx\\
&-2\int_{\Omega\cap (\{u>\delta\}\cup \{u<-\epsilon\})} f(x)\langle\varphi,\nabla u\rangle dx+R_{\epsilon,\delta}.
\end{split}
\end{equation*}
Thus, by recalling that $u$ satisfies $\mbox{div}(A\nabla u)= f(x)$ in $\Omega\setminus \{u=0\}$  we get, denoting $u^+:=\sup\{u,0\}$ and $u^-:=\sup\{-u,0\}$:
\begin{equation}\label{calcul}
\begin{split}
0=&\lim_{\epsilon\to 0}\int_{\partial\{-\epsilon<u\}}\langle \varphi,\nu\rangle(\langle A(x)\nabla u^+,\nabla u^+\rangle+M^2)dS+
\lim_{\delta\to 0}\int_{\partial\{u<\delta\}}\langle \varphi,\nu\rangle(\langle A(x)\nabla u^-,\nabla u^-\rangle+M^2)dS \\
&-2(\lim_{\epsilon\to 0}\int_{\partial\{-\epsilon<u\}}\langle \varphi,\nu\rangle\langle A(x)\nabla u^+,\nabla u^+\rangle dS+
\lim_{\delta\to 0}\int_{\partial\{u<\delta\}}\langle \varphi,\nu\rangle\langle A(x)\nabla u^-,\nabla u^-\rangle dS),
\end{split}
\end{equation}
because by hypothesis $\mbox{meas}_n(\{u=0\})=0$ so that $\lim_{\epsilon,\delta\to 0}R_{\epsilon,\delta}=0.$

Finally \eqref{calcul} leads to
\begin{equation*}
\begin{split}
&0=\lim_{\epsilon\to 0}\int_{\partial\{-\epsilon<u\}}\langle \varphi,\nu\rangle(M^2- \langle A(x)\nabla u^+,\nabla u^+\rangle) d S+
\lim_{\delta\to 0}\int_{\partial\{u<\delta\}}\langle \varphi,\nu\rangle(M^2- \langle A(x)\nabla u^-,\nabla u^-\rangle) d S.
\end{split}
\end{equation*}
\end{proof}
In conclusion we have obtained, whenever $\mbox{meas}_{n}\{u=0\}=0,$ that:
\begin{equation}\label{two_phase_matrix}
\begin{cases}
\mbox{div}(A(x)\nabla u)=f& \mbox{in }\Omega^+(u):= \{x\in \Omega:\hspace{0.1cm} u(x)>0\}\\
\mbox{div}(A(x)\nabla u)=f& \mbox{in }\Omega^-(u):=\mbox{Int}(\{x\in \Omega:\hspace{0.1cm} u(x)\leq 0\})\\
\langle A\nabla u^+ u^+\rangle-\langle A\nabla u^- u^-\rangle= q(x)\Lambda&\mbox{on }\mathcal{F}(u):=\partial \Omega^+(u)\cap \Omega.
\end{cases}
\end{equation}
where $\Lambda:=\lambda_+^2-\lambda_-^2.$ In the case of the Heisenberg group this reads as follows (see Section \ref{groupCa} for the details and further generalizations):
\begin{equation}\label{two_phase_Heisenberg_2}
\begin{cases}
\Delta_{\mathbb{H}^n} u=f& \mbox{in }\Omega^+(u):= \{x\in \Omega:\hspace{0.1cm} u(x)>0\}\\
\Delta_{\mathbb{H}^n} u=f& \mbox{in }\Omega^-(u):=\mbox{Int}(\{x\in \Omega:\hspace{0.1cm} u(x)\leq 0\})\\
|\nabla_{\mathbb{H}^n} u^+|^2-|\nabla_{\mathbb{H}^n} u^-|^2= q(x)\Lambda&\mbox{on }\mathcal{F}(u):=\partial \Omega^+(u)\cap \Omega.
\end{cases}
\end{equation}

\section{Some comments about Heisenberg group and Carnot groups}\label{groupCa}
We compute $\langle A(x)\nabla u,\nabla u\rangle$ assuming that 
$$
A=\left[
\begin{array}{lll}
1,&0,&2y\\
0,&1,&-2x\\
2y,&-2x,&,4(x^2+y^2)
\end{array}
\right].
$$
Then
\begin{equation*}
\begin{split}
&\langle A\nabla u,\nabla u\rangle=\left[
\begin{array}{l}
Xu\\
Yu\\
2y\frac{\partial u}{\partial x}-2x\frac{\partial u}{\partial y}+4\frac{\partial u}{\partial t}(x^2+y^2)
\end{array}
\right]\cdot \nabla u\\
&=Xu\frac{\partial u}{\partial x}+Yu\frac{\partial u}{\partial y}+(2y\frac{\partial u}{\partial x}-2x\frac{\partial u}{\partial y})\frac{\partial u}{\partial t}+4(\frac{\partial u}{\partial t})^2(x^2+y^2)\\
&=(Xu)^2-2yXu\frac{\partial u}{\partial t}+(Yu)^2+2xYu\frac{\partial u}{\partial t}\\
&+(2y\frac{\partial u}{\partial x}-2x\frac{\partial u}{\partial y})\frac{\partial u}{\partial t}+4(\frac{\partial u}{\partial t})^2(x^2+y^2)=(Xu)^2+(Yu)^2\\
&=\vert\nabla_{\mathbb{H}^1} u\vert^2=\langle\nabla_{\mathbb{H}^1}u, \nabla_{\mathbb{H}^1}u\rangle_{\mathbb{H}^1}
\end{split}
\end{equation*}

Notice that
$$
\mbox{div}(A(x)\nabla u(x))=X^2u+Y^2u=\Delta_{\mathbb{H}^1}u=\mbox{div}_{\mathbb{H}^1}(\nabla_{\mathbb{H}^1}u)=X(Xu)+Y(Yu).
$$

An other example, may be give for the Engel group. In this case we have:

$$
g_1\bigoplus g_2 \bigoplus g_3,
$$

where
$$
g_1=\mbox{span}\{X_1,X_2\},\quad g_2=\mbox{span}\{X_3\},\quad g_3=\mbox{span}\{X_4\},
$$

$$
[X_1,X_2]=X_3,\quad [X_1, X_3]=X_4,
$$

$$
X_1=\frac{\partial}{\partial x_1}-x_2\frac{\partial}{\partial x_3}-x_3\frac{\partial}{\partial x_4},\quad X_2=\frac{\partial}{\partial x_2},\quad X_3=\frac{\partial}{\partial x_3},\quad X_4=\frac{\partial}{\partial x_4}
$$

$$
x y=(x_1+y_1,x_2+y_2,x_3+y_3-y_1x_2, x_4+y_4+\frac{1}{2}y_1^2x_2-y_1x_3).
$$
Moreover
$$
\left[\begin{array}{ll}
1,&0\\
0,&1\\
-x_2,&0\\
-x_3,&0
\end{array}\right]\left[\begin{array}{llll}
1,&0,&-x_2,&-x_3\\
0,&1,&0,&0
\end{array}\right]=\left[\begin{array}{llll}
1,&0,&-x_2,&-x_3\\
0,&1,&0,&0\\
-x_2,&0,&x_2^2,&x_2x_3\\
-x_3,&0,&x_2x_3,&x_3^2
\end{array}\right]
$$
In this case:
$$
\Delta_{E}=X_1^2+X_2^2.
$$

We can generalize this remark. Indeed, see Section 1.5-(A3) in \cite{BLU}, it is well known that every sublaplacian $\Delta_{\mathbb{G}}=\sum_{i=1}^{n_1}Z_i^2$ on a group $\mathbb{G}$ can be written
in divergence form as:
$$
\Delta_{\mathbb{G}}=\mbox{div}(A(x)\nabla),
$$
where
\begin{equation}\label{squareroot_group}
A=\sigma(x)\sigma^T(x)
\end{equation}
and $\sigma$ is the $n\times n_1$ matrix whose columns are given by the coefficients of the vector fields  $Z_1,\dots,Z_{n_1}.$

We conclude that the two-phase problems for Carnot sublaplacians have to satisfy, whenever $\mbox{meas}_{\mathbb{G}}(\{u=0\})=0,$ the following condition on the free boundary
\begin{equation*}
\begin{split}
&0=\lim_{\epsilon\to 0}\int_{\{-\epsilon<u\}}\langle \phi,\nu\rangle(M^2- \mid \nabla_{\mathbb{G}} u^+\mid^2) d S+\lim_{\delta\to 0}\int_{\{u<\delta\}}\langle \phi,\nu\rangle(M^2- \mid \nabla_{\mathbb{G}} u^-\mid^2) d S,
\end{split}
\end{equation*}
where $\mid \nabla_{\mathbb{G}} u\mid^2=\sum_{i=1}^{n_1}(Z_iu)^2.$ Then
\begin{equation}\label{two_phase_Carnot}
\left\{\begin{array}{lr}
\Delta_{\mathbb{G}} u=f,& \mbox{in }\Omega^+(u):= \{x\in \Omega:\hspace{0.1cm} u(x)>0\},\\
\Delta_{\mathbb{G}} u=f,& \mbox{in }\Omega^-(u):=\mbox{Int}(\{x\in \Omega:\hspace{0.1cm} u(x)\leq 0\}),\\
|\nabla_{\mathbb{G}} u^+|^2-|\nabla_{\mathbb{G}} u^-|^2=q(x)(\lambda_+^2-\lambda_-^2):=q(x)\Lambda&\mbox{on }\mathcal{F}(u):=\partial \Omega^+(u)\cap \Omega,
\end{array}
\right.
\end{equation}
where, whatever the function $u$ is sufficiently smooth, it results:
$$
|\nabla_{\mathbb{G}} u|^2=\langle A(x)\nabla u,\nabla u\rangle=\langle \sigma^T\nabla u,\sigma^T\nabla u\rangle_{\mathbb{R}^{n_1}}
$$
and
$$
\nabla_{\mathbb{G}} u(x):=\sigma^T(x)\nabla u(x)=\sum_{k=1}^{n_1}Z_ku(x)Z_k(x).
$$
In the case of $\mathbb{H}^1,$ the functions like $\alpha(ax+by)^+-\beta(ax+by)^-$, where $a^2+b^2>0,$ $a,b\in \mathbb{R}$ are fixed, as well as $\alpha,\beta\in\mathbb{R},$ $\alpha,\beta>0,$  satisfy the two-phase homogeneous problem 
\begin{equation}\label{two_phase_Heisenberg}
\left\{\begin{array}{lr}
\Delta_{\mathbb{H}^1} u=0,& \mbox{in }\Omega^+(u):= \{x\in \Omega:\hspace{0.1cm} u(x)>0\},\\
\Delta_{\mathbb{H}^1} u=0,& \mbox{in }\Omega^-(u):=\mbox{Int}(\{x\in \Omega:\hspace{0.1cm} u(x)\leq 0\}),\\
|\nabla_{\mathbb{H}^1} u^+|^2-|\nabla_{\mathbb{H}^1} u^-|^2=(a^2+b^2)(\alpha^2-\beta^2)&\mbox{on }\mathcal{F}(u):=\partial \Omega^+(u)\cap \Omega.
\end{array}
\right.
\end{equation}
In this case the free boundary $\mathcal{F}(u)$ is the set $\{(x,y,t)\in \mathbb{H}^1:\quad ax+by=0\}$ that does not have characteristic points.
\section{Nonlinear case: $p(x)-$Laplace operator}\label{pxlapvar}
We now argue considering
 the following functional

$$
J(u)=\int_{\Omega}\left(a(\mid \nabla u\mid, x) +M^2(u,x)+p(x)f(x)u\right)dx,
$$

where 
$$
M(u,x)=q(x)(\lambda_+\chi_{\{u>0\}}+\lambda_{-}\chi_{\{u<0\}})
$$
and $a$ is a function that we shall introduce in a while.

We define $\tau_\varepsilon(x)=x+\varepsilon\phi(x)$ where $\phi\in C_0^\infty(\Omega,\mathbb{R}^n).$ Then
denoting by $u_\varepsilon$ the function such that $u_{\varepsilon}(\tau_{\varepsilon}x)=u(x)$ and assuming that $A$ is smooth, we get:
\begin{equation*}
\begin{split}&J(u_\varepsilon)=\int_{\Omega} \left( a(\mid\nabla u_\varepsilon(y)\mid, y)+M^2(u_\varepsilon(y), y)+p(x)f(y)u_\varepsilon(y)\right)dy\\
&=\int_{\Omega} \left( a(\mid\nabla u_\varepsilon(\tau_\varepsilon (x))\mid, \tau_\varepsilon(x))+M^2(u(\tau_\varepsilon(x)), \tau_\varepsilon(x))+pf(\tau_\varepsilon(x))u(\tau_\varepsilon(x))\right)|\mbox{det}J\tau_\varepsilon|dx
\end{split}
\end{equation*}
On the other hand, following the same notation of the case described in Section \ref{variational_x}
we obtain:
\begin{equation*}
\begin{split}
&J(u_\varepsilon)
=\int_{\Omega} \left( a(\mid J\tau_\varepsilon(x)^{-1}\nabla u(x)\mid, \tau_\varepsilon(x))+M^2(u(x), \tau_\varepsilon(x))+pf(\tau_\varepsilon(x))u(x)\right)|\mbox{det}J\tau_\varepsilon| dx\\
&=\int_{\Omega} \Big( a(\mid \nabla u(x)-\varepsilon J\phi \nabla u(x)+o(\varepsilon)\mid, \tau_\varepsilon(x))+M^2(u(x),\tau_\varepsilon(x))+pf(\tau_\varepsilon(x))u(x)\Big)|\mbox{det}J\tau_\varepsilon| dx.
\end{split}
\end{equation*}
In the case when $a(b,c)=b^{p(c)},$  denoting
$$
\mathcal{E}_{p(x)}(u):=\int_{\Omega}\left(\mid \nabla u\mid^{p(x)}+M^2(u,x)+p(x)f(x)u\right)dx,
$$
we get from the Taylor expansion:
\begin{equation*}\begin{split}
&a(\mid \nabla u(x)-\varepsilon J\phi \nabla u(x)+o(\varepsilon)\mid, \tau_\varepsilon(x)\mid=\mid \nabla u(x)-\varepsilon J\phi \nabla u(x)+o(\varepsilon)\mid^{p(\tau_\varepsilon(x))}\\
&=\mid \nabla u(x)-\varepsilon J\phi \nabla u(x)+o(\varepsilon)\mid^{p(x)+\varepsilon\langle\nabla p(x),\varphi(x)\rangle+o(\varepsilon)}\\
&=\mid \nabla u(x)-\varepsilon J\phi \nabla u(x)+o(\varepsilon)\mid^{p(x)}\mid \nabla u(x)-\varepsilon J\phi \nabla u(x)+o(\varepsilon)\mid^{\varepsilon\langle\nabla p(x),\varphi(x)\rangle+o(\varepsilon)}\\
\end{split}
\end{equation*}
so that
\begin{equation}\begin{split}
&=(\mid \nabla u(x)|^2-2\varepsilon \langle J\phi \nabla u(x),\nabla u(x)+o(1)\rangle+o(\varepsilon))^{\frac{p(x)}{2}}\mid \nabla u(x)-\varepsilon J\phi \nabla u(x)+o(\varepsilon)\mid^{\varepsilon\langle\nabla p(x),\varphi(x)\rangle+o(\varepsilon)}\\
&=\left(\mid \nabla u(x)|^{p(x)}-\varepsilon p(x)\langle J\phi \nabla u(x),\nabla u(x)\rangle|\nabla u(x)|^{p(x)-2}+o(\varepsilon)\right)\\
&\times \exp\{\varepsilon (\langle\nabla p(x), \varphi(x)\rangle+o(1)) \log(\mid \nabla u(x)-\varepsilon J\phi \nabla u(x)+o(\varepsilon)\mid)\}\\
&=\left(\mid \nabla u(x)|^{p(x)}-\varepsilon p(x)\langle J\phi \nabla u(x),\nabla u(x)\rangle|\nabla u(x)|^{p(x)-2}+o(\varepsilon)\right)\\
&\times \exp\left(\varepsilon (\langle\nabla p(x), \varphi(x)\rangle+o(1)) \left(\log(\mid \nabla u(x)\mid)+\log(1-\varepsilon\langle J\phi \nabla u(x),\nabla u(x)\rangle+o(\varepsilon))\right)\right)\\
\end{split}
\end{equation}
that is
\begin{equation*}\begin{split}
&=\left(\mid \nabla u(x)|^{p(x)}-\varepsilon p(x)\langle J\phi \nabla u(x),\nabla u(x)\rangle|\nabla u(x)|^{p(x)-2}+o(\varepsilon)\right)\\
&\times \left(1+\varepsilon \langle\nabla p(x),\nabla \varphi(x)\rangle\log \mid \nabla u(x)\mid+o(\varepsilon))\right)\\
&=\mid \nabla u(x)|^{p(x)}+\varepsilon \left(\mid \nabla u(x)|^{p(x)} \langle\nabla p(x),\varphi(x)\rangle\log \mid \nabla u(x)| -p(x)\langle J\phi \nabla u(x),\nabla u(x)\rangle|\nabla u(x)|^{p(x)-2}\right)+o(\varepsilon).
\end{split}
\end{equation*}
As a consequence
\begin{equation*}\begin{split}
&\mathcal{E}_{p(x)}(u_\varepsilon)
=\int_{\Omega} \Big( \mid \nabla u(x)|^{p(x)}+\varepsilon \Big(\mid \nabla u(x)|^{p(x)} \langle\nabla p(x),\nabla \varphi(x)\rangle\log |\nabla u(x)|\\
& -p(x)\langle J\phi \nabla u(x),\nabla u(x)\rangle|\nabla u(x)|^{p(x)-2}\Big)+o(\varepsilon))\\
&+M^2(u(x), \tau_\varepsilon(x))+(p(x)+\varepsilon \langle\nabla p(x),\phi\rangle +o(\varepsilon)) f(\tau_\varepsilon(x))u(x)\Big)|\mbox{det}J\tau_\varepsilon| dx\\
\end{split}
\end{equation*}
so that
\begin{equation*}\begin{split}
&=\int_{\Omega} \Big( \mid \nabla u(x)|^{p(x)}+M^2(u(x), x)+p(x) f(x)u(x)\Big)\Big(1+\varepsilon \mbox{Tr}(J\phi)+o(\varepsilon)\Big) dx\\
&+\varepsilon\int_{\Omega} \Big(\mid \nabla u(x)|^{p(x)} \langle\nabla p(x), \varphi(x)\rangle\log \mid \nabla u(x)|\\
& -p(x)\langle J\phi \nabla u(x),\nabla u(x)\rangle|\nabla u(x)|^{p(x)-2}+u(x)\langle\nabla p(x),\phi\rangle f(x) \Big)\\
&\times\Big(1+\varepsilon \mbox{Tr}(J\phi)+o(\varepsilon)\Big) dx\\
&+\varepsilon\int_{\Omega} \left(p(x)\langle\nabla f(x),\phi\rangle u(x)+\langle \nabla M^2(u(x), x),\phi\rangle \right)\Big(1+\varepsilon \mbox{Tr}(J\phi)+o(\varepsilon)\Big) dx+o(\varepsilon)\\
\end{split}
\end{equation*}
from which follows:
\begin{equation*}\begin{split}
&=\mathcal{E}_{p(x)}(u)+\varepsilon\Big\{\int\Big(  \mid \nabla u(x)|^{p(x)}+ M^2(u(x), x)+p(x) f(x)u(x)\Big)\mbox{Tr}(J\phi)\\
&+\int_{\Omega}\Big(\mid \nabla u(x)|^{p(x)} \langle\nabla p(x), \varphi(x)\rangle\log \mid \nabla u(x)| -p(x)\langle J\phi \nabla u(x),\nabla u(x)\rangle|\nabla u(x)|^{p(x)-2}\\
&+\langle\nabla p(x),\phi\rangle u(x) f(x)+p(x)u(x)\langle\nabla f(x),\phi\rangle+\langle \nabla M^2(u(x), x),\phi\rangle \Big)\Big\}+o(\varepsilon).
\end{split}
\end{equation*}
Thus, recalling that $u$ is a minimum, we my conclude that
$$
\lim_{\varepsilon\to 0}\frac{\mathcal{E}_{p(x)}(u_\varepsilon)-\mathcal{E}_{p(x)}(u)}{\varepsilon}=0.
$$
Thus we deduce, recalling $\mbox{Tr}(J\phi)=\mbox{div}(\phi)$ that:
\begin{equation}\begin{split}
&0=\Big\{\int_{\Omega}\Big(  \mid \nabla u(x)|^{p(x)}+M^2(u(x), x)+p(x) f(x)u(x)\Big)\mbox{div}(\phi)\\
&+\int_{\Omega}\Big(\mid \nabla u(x)|^{p(x)} \langle\nabla p(x),\phi(x)\rangle\log \mid \nabla u(x)| -p(x)\langle J\phi \nabla u(x),\nabla u(x)\rangle|\nabla u(x)|^{p(x)-2}\\
&+\langle\nabla p(x),\phi\rangle u(x)f(x)+p(x)u(x)\langle\nabla f(x),\phi\rangle+\langle\nabla M^2(u(x), x),\phi\rangle \Big)\Big\},
\end{split}
\end{equation}
that is also
\begin{equation}\begin{split}
&0=\Big\{\int_{\Omega}\Big(  \mid \nabla u(x)|^{p(x)}+M^2(u(x), x)\Big)\mbox{div}(\phi)\\
&+\int_{\Omega}\Big(\mid \nabla u(x)|^{p(x)} \langle\nabla p(x), \phi(x)\rangle\log \mid \nabla u(x)| -p(x)\langle J\phi \nabla u(x),\nabla u(x)\rangle|\nabla u(x)|^{p(x)-2}\\
&+\langle\nabla M^2(u(x), x),\phi\rangle-f(x)p(x) \langle\nabla u,\phi\rangle\Big)\Big\}.
\end{split}
\end{equation}

Hence, integrating by parts and recalling that $\mbox{div}(\mid\nabla u\mid^{p(x)-2}\nabla u)=f$ in $\Omega\setminus F(u),$ we get,
considering $\Omega= \{x\in \Omega:\:\:u<-\epsilon\}\cup  \{x\in\Omega:\:\:u>\delta\}\cup\{x\in \Omega:\:\:-\epsilon\leq u\leq \delta\},$ where $\epsilon, \delta>0$, recalling that $\Omega_{\epsilon,\delta}(u)=\{x\in \Omega:\:\:-\epsilon\leq u\leq \delta\}$ and denoting by:
\begin{equation*}
\begin{split}
&R_{\epsilon,\delta}:=\int_{\Omega_{\epsilon,\delta}(u)} \Big(  \mid \nabla u(x)|^{p(x)}+M^2(u(x), x)\Big)\mbox{div}\varphi
-\int_{\Omega_{\epsilon,\delta}(u)}p(x)\langle\nabla J\phi \nabla u(x),\nabla u(x)\rangle \\
&+\int_{\Omega_{\epsilon,\delta}(u)}\mid \nabla u(x)|^{p(x)} \langle\nabla p(x),\phi(x)\rangle\log \mid \nabla u(x)|+\int_{\Omega_{\epsilon,\delta}(u)} \langle\nabla_x M^2(u(x), x),\varphi\rangle\\
&-\int_{\Omega_{\epsilon,\delta}(u)} p(x)f(x)\langle\varphi,\nabla u\rangle,
\end{split}
\end{equation*}

we get:

\begin{equation}\begin{split}
&0=\lim_{\epsilon\to 0,\delta\to 0}\Big\{\int_{\partial\{u<-\epsilon\}}\langle n,\phi\rangle \Big((1-p(x))\mid \nabla u(x)|^{p(x)}+M^2(u(x), x)\Big)dS\\
&+\int_{\partial\{u>\delta\}}(1-p(x))\langle n,\phi\rangle \Big(\mid \nabla u(x)|^{p(x)}+M^2(u(x), x)\Big)dS+R_{\epsilon,\delta}\Big\}\\
\end{split}
\end{equation}
that implies 
\begin{equation}\begin{split}
&0=\lim_{\epsilon\to 0,\delta\to 0}\Big\{\int_{\partial\{u<-\epsilon\}}\langle n,\phi\rangle \Big((1-p(x))\mid \nabla u(x)|^{p(x)}+M^2(u(x), x)\Big)dS\\
&+\int_{\partial\{u>\delta\}}\langle n,\phi\rangle \Big((1-p(x))\mid \nabla u(x)|^{p(x)}+M^2(u(x), x)\Big)dS\Big\}\\
\end{split}
\end{equation}
because we assumed that $\mbox{meas}_n\{u=0\}=0,$ so that $\lim_{\epsilon\to 0,\delta\to 0}R_{\epsilon,\delta}=0.$

As a consequence the natural pointwise condition on the free boundary $\{u=0\}$ is:
$$
(p(x)-1)\mid\nabla u^+\mid^{p(x)}-(p(x)-1)\mid\nabla u^-\mid^{p(x)}=q(x)(\lambda_+^2-\lambda_-^2).
$$
Usually previous condition is written as well as
$$
(u^+_n)^{p(x)}-(u^-_n)^{p(x)}=q(x)\frac{\lambda_+^2-\lambda_-^2}{p(x)-1},
$$
where $u^+_n$ and $u^-_n$ denote  the normal derivatives,  computed considering $n$ pointing inside to $\Omega^+(u)$ and $\Omega^-(u)$ respectively, at the points of the set $\{u=0\},$ of course whenever this fact makes sense. In fact for every $x\in \{u=0\},$ and such that $\nabla u(x)\not=0,$ we have:
$$
u_n(x)=\langle \nabla u(x),\frac{\nabla u(x)}{|\nabla u(x)|}\rangle=|\nabla u(x)|.
$$
In conclusion the two phase problem can be formulated in viscosity sense as:
 \begin{equation}\label{two_phase_p(x)_gen}
\left\{\begin{array}{lr}
\Delta_{p(x)} u=f,& \mbox{in }\Omega^+(u):= \{x\in \Omega:\hspace{0.1cm} u(x)>0\},\\
\Delta_{p(x)} u=f,& \mbox{in }\Omega^-(u):=\mbox{Int}(\{x\in \Omega:\hspace{0.1cm} u(x)\leq 0\}),\\
|\nabla u^+|^{p(x)}-|\nabla u^-|^{p(x)}=q(x)\frac{\Lambda}{p(x)-1},&\mbox{on }\mathcal{F}(u):=\partial \Omega^+(u)\cap \Omega,
\end{array}
\right.
\end{equation}
being $\Lambda:=\lambda_+^2-\lambda_-^2.$

\section{Conclusions}\label{conclusions}
Starting from the condition on the free boundary that we have obtained, in Carnot groups for the two phase problems, we ask to ourselves if a comparison result may work in this framework. Following the mentioned viscosity approach introduced in \cite{D} and \cite{DFS_apdes, DFS_japa, DFS_trans}, the first thing to prove seems to be the existence of a comparison result. From this point of view, it is natural to recall the properties arising from the Hopf maximum principle. About this subject in Carnot groups, we cite \cite{Birindelli_Cutri}, for a detailed study, for a discussion in the Heisenberg group, and \cite{Martino_Tralli} for a generalization to the Carnot groups. In fact in \cite{Birindelli_Cutri}, see Lemma 2.1, the authors remark that if a set $\Omega$ satisfies the inner intrinsic ball property, namely if $P_0\in\partial\Omega$ is such that there exists a Koranyi ball $B_R^{\mathbb{H}^1}(Q)\subset \Omega,$ such that $P_0=\partial B_R^{\mathbb{H}^1}(Q)\cap \partial \Omega,$ $u$ satisfies $\Delta_{\mathbb{H}^1}u(P)\geq 0$ and $u(P)>u(P_0)$  for every $P\in B_R^{\mathbb{H}^1}(P_0)\cap \Omega,$  then
$$
\lim_{h\to^+0}\frac{f(P_0)-f(P_0-th)}{t}<0,
$$
where $h$ denotes any exterior direction to $\partial \Omega$ at $P_0;$ moreover, in case if $\frac{\partial f(P_0)}{\partial h}$ exists, then $\frac{\partial f(P_0)}{\partial h}<0.$
In this order of ideas the right definition of a viscosity solution for \eqref{two_phase_Carnot_x}  may be the following one.

Unfortunately, if the contact point between the set and the ball is realized in a characteristic point, then $\frac{\partial f}{\partial h}=0$ at the characteristic points along all the horizontal admissible directions $h\in H\mathbb{H}^n,$ that is $\nabla_{\mathbb{H}^n}f=0$ at the characteristic points.

We denote by $\nu$ the intrinsic normal to $\mathcal{F}(v)$ at $x_0\in \mathcal{F}(v)$ and, as usual,  $v_\nu^+(x_0), \quad v_\nu^-(x_0)$ represent the horizontal derivatives with respect to the inner intrinsic normal $\nu$ to $\Omega^+(v)$ and to $\Omega^{-}(v)$ respectively. 

We are in position to state the definition of solution of a two-phase free boundary problem in a simpler case like \eqref{two_phase_Carnot_x} as follows:
\begin{defi}
We say that $u\in C(\Omega)$ is a solution to \eqref{two_phase_Carnot_x} if:
\begin{itemize}
\item[(i)] $\Delta_{\mathbb{G}} u=f$ in a viscosity sense in $\Omega^+(u)$ and $\Omega^-(u);$
\item[(ii)] let $x_0\in \mathcal{F}(u).$ For every function $v\in C(B_\varepsilon(x_0))$, $\varepsilon >0$ such that $v\in C^{2}(\overline{B^+(v)})\cap C^{2}(\overline{B^-(v)}),$ being $B:= B_\varepsilon(x_0)$  and $\mathcal{F}(v)\in C^2,$ if $v$ touches $u$ from below (resp. above) at $x_0\in \mathcal{F}(v),$ and $x_0$ is not characteristic for $\mathcal{F}(v),$  then 
$$
(v_\nu^+(x_0))^2-(v_\nu^-(x_0))^2\leq 1\quad (\mbox{resp.}\quad (v_\nu^+(x_0))^2-(v_\nu^-(x_0))^2\geq 1).
$$

\end{itemize}
\end{defi}
Moreover, the following notion of strict comparison subsolution (supersolution) plays a fundamental role, at least in the Euclidean setting, see \cite{D} and \cite{DFS_apdes}. Here below we state it in the framework of Carnot groups.
\begin{defi} We say that a function $v\in C(\Omega)$ is a strict comparison subsolution (supersolution) to \eqref{two_phase_Carnot_x} if: $v\in C^{2}(\overline{\Omega^+(v)})\cap C^{2}(\overline{\Omega^-(v)})$ and
\begin{itemize}
\item[(i)] $\Delta_{\mathbb{G}} v>f$ (resp. $\Delta_{\mathbb{G}} v<f$) in a viscosity sense in $\Omega^+(v)\cup\Omega^-(v);$
\item[(ii)] for every  $x_0\in \mathcal{F}(v),$ if $x_0$ is not characteristic for $\mathcal{F}(v),$ then 
$$
(v_\nu^+(x_0))^2-(v_\nu^-(x_0))^2> 1\quad (\mbox{resp.}\quad (v_\nu^+(x_0))^2-(v_\nu^-(x_0))^2<1.
$$
\end{itemize}
\end{defi}
As a consequence, we obtain the following result.
\begin{teo}
None strict viscosity subsolution $v$ of \eqref{two_phase_Carnot_x} can touch a solution $u$ from below at none point in $\mathcal{F}(u)\cap\mathcal{F}(v)$ that is noncharacteristic for $\mathcal{F}(v)$. Analogously, none strict comparison supersolution $v$  of \eqref{two_phase_Carnot_x} can  touch a viscosity solution $u$ from above at points belonging to $\mathcal{F}(u)\cap\mathcal{F}(v)$ that are noncharacteristic for $\mathcal{F}(v).$
\end{teo}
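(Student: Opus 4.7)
The plan is to proceed by contradiction, exploiting the fact that a strict comparison subsolution is, by its very smoothness and noncharacteristic geometry, an admissible test function in condition (ii) of the viscosity definition for $u$. The argument is then essentially a direct clash between the strict inequality coming from being a strict subsolution and the non-strict inequality forced by being a viscosity solution. The interior equations do not intervene explicitly in the contradiction (they would be the substance of the analogous result for non-strict subsolutions, where Hopf plus the strong maximum principle of Bony in $\mathbb{G}$ are needed to upgrade weak inequalities to strict ones).

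Suppose, for contradiction, that a strict comparison subsolution $v$ of \eqref{two_phase_Carnot_x} touches the viscosity solution $u$ from below at some point $x_0 \in \mathcal{F}(u)\cap\mathcal{F}(v)$ which is noncharacteristic for $\mathcal{F}(v)$. First I would verify that $v$ satisfies the hypotheses of condition (ii) in the definition of viscosity solution. By definition of strict comparison subsolution, $v\in C^2(\overline{\Omega^+(v)})\cap C^2(\overline{\Omega^-(v)})$, hence on a sufficiently small ball $B=B_\varepsilon(x_0)$ one has $v\in C^2(\overline{B^+(v)})\cap C^2(\overline{B^-(v)})$. The noncharacteristic hypothesis ensures $\nabla_{\mathbb{G}} v(x_0)\neq 0$ in the horizontal sense, so the implicit function theorem applied to the $C^2$ function $v$ gives that $\mathcal{F}(v)$ is a $C^2$ surface near $x_0$ with a well-defined intrinsic normal $\nu$. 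Thus $v$ is admissible as a test function, and condition (ii) in the definition of viscosity solution at $x_0$ yields
$$
(v_\nu^+(x_0))^2 - (v_\nu^-(x_0))^2 \leq 1.
$$
On the other hand, condition (ii) in the definition of strict comparison subsolution gives directly at the same noncharacteristic point $x_0\in\mathcal{F}(v)$
$$
(v_\nu^+(x_0))^2 - (v_\nu^-(x_0))^2 > 1,
$$
which is incompatible with the previous inequality. The supersolution case is symmetric: one assumes $v$ touches $u$ from above, applies the \textquotedblleft resp.\textquotedblright\ branch of each definition, and obtains the opposite pair of strict and non-strict inequalities.

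The main conceptual obstacle, and the reason the noncharacteristic assumption is indispensable, is the intrinsic degeneracy of $\Delta_{\mathbb{G}}$ at characteristic points of $\mathcal{F}(v)$. As recalled in the excerpt following Birindelli--Cutri, at a characteristic point $\nabla_{\mathbb{G}} v$ vanishes in every horizontal admissible direction, so the Hopf-type principle cannot produce a nonzero sign for the horizontal normal derivative, the intrinsic normal $\nu$ is not well-defined, and the boundary quantity $(v_\nu^+)^2-(v_\nu^-)^2$ loses geometric meaning. Consequently, both the viscosity solution and the strict comparison subsolution definitions restrict their free-boundary requirements to noncharacteristic points, and the present theorem must match this restriction; a finer treatment of the characteristic set would require a genuinely different tool.
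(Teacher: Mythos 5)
Your proposal is correct and is exactly the argument the paper intends: its proof is the one-line remark that the statement ``follows by the definitions of solution and strict sub/super-solution in $\mathbb{G}$,'' i.e.\ precisely the clash you exhibit between the strict inequality $(v_\nu^+(x_0))^2-(v_\nu^-(x_0))^2>1$ from the subsolution definition and the non-strict inequality $\leq 1$ forced on the admissible test function $v$ by condition (ii) of the viscosity definition at a noncharacteristic touching point. Your additional verification that $v$ is an admissible comparison function near $x_0$, and your remarks on the role of the noncharacteristic hypothesis, only make explicit what the paper leaves implicit.
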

\begin{proof}
It follows by the definitions of solution and strict sub/super-solution in $\mathbb{G}$.
\end{proof}
\begin{cor}
Let $v$ and $u$ be respectively a strict subsolution and a solution of \eqref{two_phase_Carnot_x} in $\mathbb{G}.$ If $v\leq u$ in $\Omega$ and $\mathcal{F}(v)$ is a noncharacteristic set then $v<u$ in $\Omega.$

Let $w$ and $u$  be respectively a strict supersolution and a solution of\eqref{two_phase_Carnot_x} in $\mathbb{G}.$ If $w\geq u$ in $\Omega$ and $\mathcal{F}(w)$ is a noncharacteristic set, then $w>u$ in $\Omega.$
\end{cor}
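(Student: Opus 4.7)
My plan is to proceed by contradiction: assume that $v(x_0)=u(x_0)$ at some $x_0\in\Omega$ and trichotomize according to the location of $x_0$ with respect to the phases of $u$ and $v$.

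The \emph{interior contact} case is handled first. If $u(x_0)\neq 0$, say $u(x_0)>0$, then by continuity a neighbourhood $U$ of $x_0$ lies inside $\Omega^{+}(u)\cap\Omega^{+}(v)$; since $v\in C^{2}(\overline{\Omega^{+}(v)})$ touches $u$ from below at $x_0$, the viscosity supersolution property of $u$ in $\Omega^{+}(u)$ gives $\Delta_{\mathbb{G}}v(x_0)\le f(x_0)$, contradicting the strict pointwise inequality $\Delta_{\mathbb{G}}v(x_0)>f(x_0)$ from condition~(i) of the strict subsolution. Equivalently, Bony's strong maximum principle applied to $u-v\ge 0$ in $U$ gives the same contradiction. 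The case $u(x_0)<0$ is symmetric, working inside $\Omega^{-}(u)\cap\Omega^{-}(v)$.

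In the \emph{free boundary contact} case $u(x_0)=v(x_0)=0$ with $x_0\in\mathcal{F}(v)$, I would first note that $x_0\in\mathcal{F}(u)$ as well: every neighbourhood of $x_0$ contains points with $v>0$, and there $u\ge v>0$; combined with $u(x_0)=0$ this places $x_0$ in $\overline{\Omega^{+}(u)}\setminus\Omega^{+}(u)\subset\mathcal{F}(u)$. Since $\mathcal{F}(v)$ is noncharacteristic at $x_0$ by hypothesis, the theorem immediately preceding the Corollary applies verbatim and prohibits $v$ from touching $u$ from below at $x_0$, yielding the contradiction.

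The hard part will be the \emph{degenerate configuration} $u(x_0)=v(x_0)=0$ with $x_0\in\Omega^{-}(v)$, i.e.\ $v\le 0$ in a whole neighbourhood of $x_0$ while $x_0\in\mathcal{F}(u)$; here the previous theorem does not apply directly because $v$ has no free boundary at $x_0$. My approach is to modify $v$ into $\tilde v(x)=v(x)+\epsilon\,\phi(\langle\nu,x-x_0\rangle)$, where $\nu=\nabla_{\mathbb{G}}u^{+}(x_0)/|\nabla_{\mathbb{G}}u^{+}(x_0)|$ is a horizontal unit vector (so that the bump creates a noncharacteristic free boundary through $x_0$) and $\phi\ge 0$ is a smooth truncation with $\phi(s)=0$ for $s\le 0$ and $\phi'(0^{+})=1$; for $\epsilon$ chosen in the window $(1,|\nabla_{\mathbb{G}}u^{+}(x_0)|)$ opened by the flux condition $|\nabla_{\mathbb{G}}u^{+}|^{2}-|\nabla_{\mathbb{G}}u^{-}|^{2}=1$, one verifies that $\tilde v\le u$ near $x_0$ and $\tilde v$ remains a strict subsolution touching $u$ from below at a noncharacteristic point of $\mathcal{F}(\tilde v)$, so the preceding theorem again closes the case. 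The main obstacle is the degenerate one-phase subcase $|\nabla_{\mathbb{G}}u^{+}(x_0)|=1$, where no such $\epsilon$ exists and one has to fall back on the Hopf boundary point lemma in Carnot groups of Birindelli--Cutri~\cite{Birindelli_Cutri} (extended in~\cite{Martino_Tralli}) applied to $u-v$ on the component of $\Omega^{+}(u)$ accumulating at $x_0$, exploiting $\nabla_{\mathbb{G}}v(x_0)=0$ (interior maximum of $v$ on $\Omega^{-}(v)$). The dual statement for strict supersolutions $w\ge u$ follows by the symmetric trichotomy, using the \emph{above} half of the preceding theorem.
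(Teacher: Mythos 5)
Your first two cases reproduce the paper's own argument: interior contact is excluded by Bony's strong maximum principle for $\Delta_{\mathbb{G}}$ applied to $v-u$ (the paper writes $\Delta_{\mathbb{G}}v-\Delta_{\mathbb{G}}u\ge 0$ in $\Omega^{+}(u)\cup\Omega^{-}(u)$ and invokes the maximum principle), and contact at a common noncharacteristic point of $\mathcal{F}(u)\cap\mathcal{F}(v)$ is excluded by the preceding theorem, i.e.\ directly by the definitions of viscosity solution and strict comparison subsolution. Up to that point you and the paper coincide.

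The divergence is your third, ``degenerate'' configuration $x_0\in\mathcal{F}(u)\cap\Omega^{-}(v)$. The paper's proof silently excludes it: after ruling out interior contact it asserts that the contact point lies in $\mathcal{F}(u)\cap\mathcal{F}(v)$, which does not follow, since nothing prevents $v\le 0$ on a full neighbourhood of a point of $\mathcal{F}(u)$ where $u$ vanishes. You are right to isolate this case; it is a real gap in the paper's one-line argument. However, your proposed repair does not close it at the stated level of generality. You take $\nu=\nabla_{\mathbb{G}}u^{+}(x_0)/|\nabla_{\mathbb{G}}u^{+}(x_0)|$ and use the pointwise identity $|\nabla_{\mathbb{G}}u^{+}|^{2}-|\nabla_{\mathbb{G}}u^{-}|^{2}=1$ at $x_0$, but $u$ is only a viscosity solution: neither the existence of $\nabla_{\mathbb{G}}u^{\pm}(x_0)$ nor the classical validity of the flux relation is available, so the ``window'' $(1,|\nabla_{\mathbb{G}}u^{+}(x_0)|)$ is not defined. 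Even granting differentiability, verifying $\tilde v\le u$ near $x_0$ requires a nondegeneracy estimate of the type $u^{+}\ge c\,\dist(\cdot,\mathcal{F}(u))$ in order to fit a bump of slope $\epsilon>1$ under $u^{+}$, and this is not among the hypotheses; the fallback to the Hopf lemma of \cite{Birindelli_Cutri}, \cite{Martino_Tralli} likewise needs an interior Kor\'anyi-ball condition for $\Omega^{+}(u)$ at $x_0$, which is not known. The natural way to use your perturbation is as a test function in the viscosity definition of $u$ (one then needs only $(\tilde v_{\nu}^{+})^{2}-(\tilde v_{\nu}^{-})^{2}>1$ together with the touching inequality, not a classical flux identity for $u$), but the touching inequality itself is exactly what cannot be guaranteed without extra information on $u$. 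In short: your case analysis is more honest than the paper's, but the additional case remains open in both treatments.
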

\begin{proof}
Suppose that strict subsolution of \eqref{two_phase_Euc} such that  $v\leq u.$ Then such point $x_0$ can not be inside $\Omega^+(u)\cup\Omega^{-}(u)$ because, on the contrary, from
$$
\Delta_{\mathbb{G}} v-\Delta_{\mathbb{G}} u\geq f(x)-f(x)=0
$$
in $\Omega^+(u)\cup\Omega^{-}(u)$
and $v-u$ realizing a maximum at $x_0$ we would introduce a contradiction with the maximum principle. Then this contact point  $x_0\in\mathcal{F}(u)\cap\mathcal{F}(v),$ and, by the definition of strict subsolution, this fact can not happen. 
\end{proof}

As a consequence it might exist solutions $u,v$ of \eqref{two_phase_Carnot_x} such that $v\leq u,$ $u\not \equiv v$ but $u,$ $v$ might touch in a characteristic point $x_0\in\mathcal{F}(u)\cap\mathcal{F}(v).$ In fact it is well known that a Hopf maximum principle in the Heisenberg group formulated simply substituting to the normal derivative at a boundary point the intrinsic (horizontal) normal derivative  fails, since they may exist characteristic points on a $C^1$ surface. For instance, sets with genus $0$ (without holes) having smooth boundary have always characteristic points belonging to the boundary. As a consequence, they can not exist solutions of \eqref{two_phase_Euc} satisfying flux condition  pointwise on the free boundary, when $\mathcal{F}(u)$ is the boundary of a set of genus $0.$ 

Here we give some examples of solutions in $\mathbb{H}^1. $ Let $u$ be a solution of a two phase problem \eqref{two_phase_Euc} in a set $A\subset \mathbb{R}^2$  satisfying the same condition $|\nabla u^+|^2-|\nabla u^-|^2=1$ (in the Euclidean setting) on $\mathcal{F}(u):=A\cap\partial A(u)$. Then $\tilde{u}(x,u,t)=u(x,y)$ is a solution of  \eqref{two_phase_Carnot_x} in the cylinder $\Omega=A\times (a,b),$ when $\mathbb{G}=\mathbb{H}^1.$

In the case of the $p(x)-$Laplace operator characteristic points do not exist. So that the definition of solution of the simpler problem \eqref{two_phase_p(x)}, in the viscosity sense, is the following one, keeping in mind that
we denote by $n$ the normal to $\mathcal{F}(v)$ at $x_0\in \mathcal{F}(v)$ and, by $v_n^+(x_0), \quad v_n^-(x_0)$  we denote the normal derivatives with respect to the inner normal $n$ to $\Omega^+(v)$ and to $\Omega^{-}(v)$ respectively. 
\begin{defi}\label{defi_plap_x}
Let $u\in C(\Omega).$ We say that $u$ is a solution to \eqref{two_phase_p(x)} if:
\begin{itemize}
\item[(i)] $\Delta_{p(x)} u=f$ in a viscosity sense in $\Omega^+(u)$ and $\Omega^-(u);$
\item[(ii)] for every $x_0\in \mathcal{F}(u)$ and for every function $v\in C(B_\varepsilon(x_0))$, $\varepsilon >0$ such that $v\in C^{2}(\overline{B^+(v)})\cap C^{2}(\overline{B^-(v)}),$ being $B:= B_\varepsilon(x_0)$  and $\mathcal{F}(v)\in C^2$ and $\nabla v(x_0)\not=0,$ 

if $v$ touches $u$ from below (resp. above) at $x_0\in \mathcal{F}(v),$ then 
$$
(v_n^+(x_0))^2-(v_n^-(x_0))^2\leq 1\quad (\mbox{resp.}\quad (v_n^+(x_0))^2-(v_n^-(x_0))^2\geq 1).
$$
\end{itemize}
\end{defi}
In this case, even if we consider only "non-degenerate" points where $\nabla v\not=0$ on $\mathcal{F}(u),$ the Hopf maximum principle holds in the classical sense, so that, introducing the following strict comparison notion of subsolution/supersolution,
\begin{defi} \label{defi_plap_x_strict} $v\in C(\Omega)$ is a strict comparison subsolution (supersolution) to \eqref{two_phase_p(x)} if: $v\in C^{2}(\overline{\Omega^+(v)})\cap C^{2}(\overline{\Omega^-(v)})$ and
\begin{itemize}
\item[(i)] $\Delta_{p(x)} v>f$ (resp. $\Delta_{p(x)} v<f$) in a viscosity sense in $\Omega^+(v)\cup\Omega^-(v);$
\item[(ii)] for every  $x_0\in \mathcal{F}(v),$ if $\nabla v(x_0)\not=0$, then 
$$
(v_n^+(x_0))^2-(v_n^-(x_0))^2> 1\quad (\mbox{resp.}\quad (v_n^+(x_0))^2-(v_n^-(x_0))^2<1.
$$
\end{itemize}
\end{defi}
As a consequence we obtain the following result.
\begin{teo}
None strict viscosity subsolution $v$ of \eqref{two_phase_p(x)} can touch a solution $u$ from below. Analogously, none strict comparison supersolution $v$  of \eqref{two_phase_p(x)} can  touch a viscosity solution $u$ from above.
\end{teo}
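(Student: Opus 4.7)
The plan is to argue by contradiction. Suppose $v$ is a strict comparison subsolution of \eqref{two_phase_p(x)} which touches a viscosity solution $u$ of the same problem from below at some point $x_0\in\Omega$. I split the argument according to the location of $x_0$.

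\emph{Interior case: $x_0\in\Omega^+(u)\cup\Omega^-(u)$.} In this open set $u$ is a viscosity solution of $\Delta_{p(x)}u=f$, hence in particular a viscosity supersolution. Since $v$ is $C^2$ in a neighborhood of $x_0$ and touches $u$ from below with $v(x_0)=u(x_0)$, the function $v$ itself is an admissible test function from below for $u$, so the supersolution property forces $\Delta_{p(x)}v(x_0)\leq f(x_0)$. This directly contradicts $\Delta_{p(x)}v(x_0)>f(x_0)$ imposed by Definition~\ref{defi_plap_x_strict}(i), ruling out this case.

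\emph{Free boundary case: $x_0\in\mathcal{F}(u)$.} Then $v(x_0)=u(x_0)=0$. I first reduce to the subcase $x_0\in\mathcal{F}(v)$ with $\nabla v(x_0)\neq 0$: indeed, if $x_0\notin\mathcal{F}(v)$ then $v\leq 0$ in a whole neighborhood of $x_0$ with interior maximum at $x_0$, so $\nabla v(x_0)=0$; using that $x_0\in\partial\Omega^+(u)$ is accumulated by points of $\Omega^+(u)$, a small vertical shift $v+s$, $s\in[0,\eta]$, slides $v$ until it first touches $u$ from below at a new point $x_1\in\Omega^+(u)\cup\Omega^-(u)$, which is handled by the interior case above. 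Once $x_0\in\mathcal{F}(u)\cap\mathcal{F}(v)$ with $\nabla v(x_0)\neq 0$, the function $v$ is an admissible test function in Definition~\ref{defi_plap_x}(ii): touching $u$ from below at $x_0\in\mathcal{F}(v)$ forces
$$
(v_n^+(x_0))^2-(v_n^-(x_0))^2\leq 1,
$$
whereas Definition~\ref{defi_plap_x_strict}(ii) yields
$$
(v_n^+(x_0))^2-(v_n^-(x_0))^2>1,
$$
giving the desired contradiction. The strict supersolution case is entirely symmetric: if $w$ is a strict supersolution touching $u$ from above, exchange the roles of sub/super and reverse the inequalities in both cases.

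The main obstacle I anticipate is the reduction step in the free boundary case when $x_0\in\mathcal{F}(u)\setminus\mathcal{F}(v)$, where the strict flux inequality is vacuous (since $\nabla v(x_0)=0$) and one cannot invoke Definition~\ref{defi_plap_x_strict}(ii) directly. In contrast with the Carnot group situation, this obstruction is only apparent for $p(x)$-Laplace, because $\mathcal{F}(v)\in C^2$ has no characteristic points and the classical Hopf principle applies whenever $\nabla v\neq 0$; the sliding argument above then makes the reduction to the interior case rigorous.
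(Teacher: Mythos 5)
Your skeleton coincides with the paper's: the printed proof is a single sentence asserting that the claim ``immediately follows'' from Definitions \ref{defi_plap_x} and \ref{defi_plap_x_strict}, with interior contact excluded by the interior maximum principle and free-boundary contact excluded by playing condition (ii) of the definition of solution against condition (ii) of the definition of strict subsolution; your first two cases are exactly this, spelled out correctly (and your observation that a contact point in $\Omega^{-}(u)$ automatically lies in the open set $\Omega^{-}(v)$, where $v$ is $C^{2}$, is a detail the paper omits).

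The divergence, and the place where your write-up has a genuine gap, is the degenerate contact point. First, your case analysis is incomplete: you reduce to ``$x_0\in\mathcal{F}(v)$ with $\nabla v(x_0)\neq 0$'' by treating only the alternative $x_0\notin\mathcal{F}(v)$, but Definition \ref{defi_plap_x_strict} does not exclude points $x_0\in\mathcal{F}(v)$ with $\nabla v(x_0)=0$ (its condition (ii) is simply vacuous there), and at such a point neither your interior argument nor the flux comparison gives anything, while your reduction (which rests on $v\le 0$ in a full neighborhood of $x_0$) is unavailable because $x_0\in\partial\{v>0\}$. Second, the sliding step itself is not conclusive: the first touching point $x_1$ of $v+s$ with $u$ is not shown to lie in $\Omega^+(u)\cup\Omega^-(u)$ --- it may land again on $\mathcal{F}(u)$, where $v+s$ is merely a $C^2$ function carrying no usable strict inequality for the free boundary relation, or on $\partial B_r(x_0)$ if $u-v$ is not bounded away from zero there --- so the promised reduction to the interior case is not actually achieved. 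The paper short-circuits all of this by asserting, ``via the Hopf maximum principle,'' that the gradient at a free-boundary contact point cannot vanish; that assertion is itself left unproved there, but it identifies the missing ingredient correctly: what is needed is a Hopf-type nondegeneracy statement at the contact point (using the interior $C^2$ tangent ball supplied by $\mathcal{F}(v)$ inside $\Omega^{+}(v)\subset\Omega^{+}(u)$), not a vertical translation of $v$.
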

\begin{proof} The proof immediately follows applying the definitions \eqref{defi_plap_x}, \eqref{defi_plap_x}, because of  inner maximum principle and via the Hopf maximum principle since, in the last case, the gradient on that contact boundary points can not be $0.$ 
\end{proof}


\begin{thebibliography}{30}
  \bibitem{ACF} W. Alt, L. Caffarelli, A. Friedman; Variational problems with two phases and their free boundaries, Trans. Amer. Math. Soc. 282 (1984), no. 2, 431--461. 
  \bibitem{AF} R. Argiolas, F. Ferrari; Flat free boundaries regularity in two-phase problems for a class of fully nonlinear elliptic operators with variable coefficients. Interfaces Free Bound. 11 (2009), no. 2, 177--199.
\bibitem{BCD} M. Bardi, I. Capuzzo-Dolcetta; Italo Optimal control and viscosity solutions of Hamilton-Jacobi-Bellman equations. With appendices by Maurizio Falcone and Pierpaolo Soravia. Systems \& Control: Foundations \& Applications. Birkh\"auser Boston, Inc., Boston, MA, 1997.
 \bibitem{BLU} A. Bonfiglioli, E. Lanconelli, F. Uguzzoni; Stratified Lie groups and potential theory for their sub-Laplacians. Springer Monographs in Mathematics. Springer, Berlin, 2007.
 \bibitem{Bony}  J. M. Bony; Principe du maximum, in\'egalite de Harnack et unicité du problème de Cauchy pour les opérateurs elliptiques d\'eg\'en\'er\'es. (French) Ann. Inst. Fourier (Grenoble) 19 (1969), no. fasc., fasc. 1, 277--304 
 \bibitem{Biri} I. Birindelli; Superharmonic functions in the Heisenberg group: estimates and Liouville theorems, {\it Nonlinear differ. equ. appl.} \textbf{10} (2003) 171--185.
\bibitem{Birindelli_Cutri} I. Birindelli,  A. Cutr\`i, 
A semi-linear problem for the Heisenberg Laplacian. 
Rend. Sem. Mat. Univ. Padova 94 (1995), 137--153.

\bibitem{Braga_Moreira} J. E. Braga, D.R Moreira; Uniform Lipschitz regularity for classes of minimizers in two phase free boundary problems in Orlicz spaces with small density on the negative phase. Ann. Inst. H. Poincar\' e Anal. Non Lin\'eaire 31 (2014), no. 4, 823--850. 
 \bibitem{C1} L. A. Caffarelli,  A Harnack inequality approach to the regularity of free boundaries. I. Lipschitz free boundaries are $C^{1,\alpha}$. Rev. Mat. Iberoamericana 3 (1987), no. 2, 139--162.
 \bibitem{C3} L. A.  Caffarelli; A Harnack inequality approach to the regularity of free boundaries. III. Existence theory, compactness, and dependence on X. Ann. Scuola Norm. Sup. Pisa Cl. Sci. (4) 15 (1988), no. 4, 583--602 (1989).
 \bibitem{CC}  L. A. Caffarelli, X. Cabr\'e, Xavier Fully nonlinear elliptic equations. American Mathematical Society Colloquium Publications, 43. American Mathematical Society
\bibitem{CJK} L.A. Caffarelli,  D. Jerison, C. E. Kenig; Some new monotonicity theorems with applications to free boundary problems, Ann. of Math. (2) 155 (2002), no. 2, 369--404. 
\bibitem{CS} L. Caffarelli, S. Salsa : A geometric approach to free boundary problems. Graduate Studies in Mathematics, 68. American Mathematical Society, Providence, RI, 2005.
\bibitem{capdangar} L. Capogna, D. Danielli, D., N. Garofalo; The geometric Sobolev embedding for vector fields and the isoperimetric inequality. Commun. Anal. Geom. 2(2), 203--215 (1994).
\bibitem{CaDaGa} L. Capogna, D. Danielli, N. Donatella;  Embedding theorems and the Harnack inequality for solutions of nonlinear subelliptic equations. C. R. Acad. Sci. Paris Sér. I Math. 316 (1993), no. 8, 809--814.
\bibitem{CFS} M. C. Cerutti, F. Ferrari, and S. Salsa, Two phase problems for linear elliptic operators with variable coefficients: Lipschitz free boundaries are $C^{1,\gamma}$, Arch. Rat. Mech. Anal. 171 (2004), 329--348.
\bibitem{CIL} M. G. Crandall, H. Ishii, P. L. Lions;  User's guide to viscosity solutions of second order partial differential equations. Bull. Amer. Math. Soc. (N.S.) 27 (1992), no. 1, 1--67. 
\bibitem{DSV} G. De Philippis, L Spolaor, B Velichkov; Regularity of the free boundary for the two-phase Bernoulli problem, arXiv preprint arXiv:1911.02165, (2019).
\bibitem{D} D. De Silva; Free boundary regularity for a problem with right hand side, Interfaces Free Bound. 13:2 (2011), 223--238. 
\bibitem{DFS_apdes} D. De Silva, F. Ferrari, S. Salsa; Two-phase problems with distributed sources: regularity of the free boundary. Anal. PDE 7 (2014), no. 2, 267--310.
\bibitem{DFS_japa} D. De Silva, F. Ferrari, S. Salsa; Free boundary regularity for fully nonlinear non-homogeneous two-phase problems. J. Math. Pures Appl. (9) 103 (2015), no. 3, 658--694.
\bibitem{DFS_trans} D. De Silva, F. Ferrari, S. Salsa; Regularity of higher order in two-phase free boundary problems. Trans. Amer. Math. Soc. 371 (2019), no. 5, 3691--3720. 
\bibitem{Feldman} M. Feldman; Regularity of Lipschitz free boundaries in two-phase problems for fully nonlinear elliptic equations, Indiana Univ. Math. J. 50:3 (2001), 1171--1200.
\bibitem{Fe} 
F. Ferrari; preprint
\bibitem{FeFo2} 
F. Ferrari, N. Forcillo;  Some remarks about the existence of an Alt-Caffarelli-Friedman monotonicity formula in the Heisenberg group, preprint (2019).
\bibitem{FeFo2_ar} 
F. Ferrari, N. Forcillo; A new glance to the Alt-Caffarelli-Friedman monotonicity formula, preprint (2019)
\bibitem{FS_advan} F. Ferrari, S. Salsa; Regularity of the free boundary in two-phase problems for linear elliptic operators, Adv. Math. 214:1 (2007), 288--322.
\bibitem{FeVa} 
F. Ferrari, E. Valdinoci; Density estimates for a fluid jet model in the Heisenberg group,
Journal of Mathematical Analysis and Applications 382 (1), 448--468.
\bibitem{FraLan} B. Franchi, E. Lanconelli; H\"older regularity theorem for a class of linear nonuniformly elliptic operators with measurable coefficients. Ann. Scuola Norm. Sup. Pisa Cl. Sci. (4) 10 (1983), no. 4, 523--541.
\bibitem{FSSC_houston} B. Franchi, B., R. Serapioni, R., F.S. Cassano; Meyers-Serrin type theorems and relaxation of variational integrals depending on vector fields. Houston J. Math. 22(4), 859--890 (1996). 
\bibitem{FSSC_step2} B. Franchi, B., R. Serapioni, R., F.S. Cassano; On the structure of finite perimeter sets in step 2 Carnot groups. J. Geom. Anal. 13(3-), 421--466 (2003). 
\bibitem{FSSC_CAG} B. Franchi, B., R. Serapioni, R., F.S. Cassano; Regular hypersurfaces, intrinsic perimeter and implicit function theorem in Carnot groups. Commun. Anal. Geom. 11(5), 909--944 (2003). 
\bibitem{GN} N. Garofalo, N., D.-M Nhieu; Isoperimetric and Sobolev inequalities for Carnot-Carath\'eodory spaces and the existence of minimal surfaces. Commun. Pure Appl. Math. 49(10), 1081--1144 (1996).

\bibitem{Garofalo_Rotz} N. Garofalo,  K. Rotz;
Properties of a frequency of Almgren type for harmonic functions in Carnot groups,
Calc. Var. Partial Differential Equations 54 (2015), no. 2, 2197--2238. 
 \bibitem{Jerison_P} D. S. Jerison; 
The Poincar\' e inequality for vector fields satisfying H\"rmander's condition.
Duke Math. J. 53 (1986), no. 2, 503--523.
\bibitem{K} A. Karakhanyan; Regularity for the two phase singular perturbation problems arXiv:1910.06997 (2019).
\bibitem{Lederman_Wol_1} C. Lederman, N. Wolanski;  Weak solutions and regularity of the interface in an inhomogeneous free boundary problem for the $p(x)$-Laplacian. Interfaces Free Bound. 19 (2017), no. 2, 201--241.
\bibitem{Lederman_Wol_2} C. Lederman, N. Wolanski; Inhomogeneous minimization problems for the p(x)-Laplacian. J. Math. Anal. Appl. 475 (2019), no. 1, 423--463.
\bibitem{Leitao_Ricarte} R.  Leit$\tilde{a}$o, G. Ricarte;
Free boundary regularity for a degenerate problem with right hand side. 
Interfaces Free Bound. 20 (2018), no. 4, 577--595. 
\bibitem{LDT} R. Leit$\tilde{a}$o, O. S. de Queiroz, E.V. Teixeira; Regularity for degenerate two-phase free boundary problems. Ann. Inst. H. Poincar\'e Anal. Non Lin\'eaire 32 (2015), no. 4, 741--762.
\bibitem{LN} J. L. Lewis and K. Nystr\"om; Regularity of Lipschitz free boundaries in two-phase problems for the p-Laplace operator, Adv. Math. 225:5 (2010), 2565--2597.
\bibitem{Lu} G. Lu; The sharp Poincar\'e inequality for free vector fields: an endpoint result. Rev. Mat. Iberoamericana 10 (1994), no. 2, 453--466.
\bibitem{magnani} V. Magnani; Differentiability and area formula on stratified Lie groups. Houston J. Math. 27(2), 297--323 (2001).
\bibitem{Martino_Tralli} V. Martino, G. Tralli; On the Hopf-Oleinik lemma for degenerate-elliptic equations at characteristic points. Calc. Var. Partial Differential Equations 55 (2016), no. 5, Art. 115, 20 pp.
\bibitem{Morrey} C. B. Morrey Jr ; Multiple integrals in the calculus of variations. Reprint of the 1966 edition. Classics in Mathematics. Springer-Verlag, Berlin, 2008. 
\bibitem{TZ} E. V. Teixeira,  L. Zhang;  Monotonicity theorems for Laplace Beltrami operator on Riemannian manifolds. Adv. Math. 226 (2011), no. 2, 1259--1284.
\bibitem{Weiss} G. S. Weiss, 
Partial regularity for weak solutions of an elliptic free boundary problem;
Comm. Partial Differential Equations 23 (1998), no. 3-4, 439--455.
\end{thebibliography}
\end{document}